\numberwithin{equation}{section}
\def\cb{{\mathcal B}}
\def\cd{{\mathcal D}}
\def\ch{{\mathcal H}}
\def\cs{{\mathcal S}}
\def\ga{{\mathfrak A}}
\def\gb{{\mathfrak B}}
\def\ba{{\mathbb A}}
\def\bc{{\mathbb C}}
\def\bn{{\mathbb N}}
\def\br{{\mathbb R}}
\def\bt{{\mathbb T}}
\def\bz{{\mathbb Z}}
\def\a{\alpha}
\def\b{\beta}
\def\g{\gamma}  
\def\d{\delta}
\def\l{\lambda} 
\def\m{\mu}
\def\n{\nu}
\def\r{\rho}
\def\s{\sigma} 
\def\t{\tau}
\def\f{\varphi}  \def\F{\Phi}
\def\th{\theta} 
\def\om{\omega}
\def\id{\hbox{id}}
\def\ker{\hbox{Ker}}
\newtheorem{thm}{Theorem}[section]
\newtheorem{lem}[thm]{Lemma}
\newtheorem{cor}[thm]{Corollary}
\newtheorem{prop}[thm]{Proposition}
\newtheorem{rem}[thm]{Remark}
\newtheorem{defin}[thm]{Definition}
\newcommand{\ty}[1]{\mathop{\rm {#1}}}
\def\di{{\rm d}}
\def\ad{\mathop{\rm ad}}
\def\idd{{1}\!\!{\rm I}}
\DeclareMathAlphabet{\mathpzc}{OT1}{pzc}{m}{it}
\begin{document}
\title[the Anzai skew-product for the noncommutative torus]
{Ergodic properties of the Anzai skew-product for the noncommutative torus}
\author[S. Del Vecchio]{Simone Del Vecchio}
\address{Simone Del Vecchio\\
Dipartimento di Matematica \\
Universit\`{a} di Roma Tor Vergata\\
Via della Ricerca Scientifica 1, Roma 00133, Italy} \email{{\tt
delvecch@mat.uniroma2.it}}
\author[F. Fidaleo]{Francesco Fidaleo}
\address{Francesco Fidaleo\\
Dipartimento di Matematica \\
Universit\`{a} di Roma Tor Vergata\\
Via della Ricerca Scientifica 1, Roma 00133, Italy} \email{{\tt
fidaleo@mat.uniroma2.it}}
\author[L. Giorgetti]{Luca Giorgetti}
\address{Luca Giorgetti\\
Department of Mathematics, Vanderbilt University, 1326 Stevenson Center, Nashville, TN 37240, US}
 \email{{\tt
 luca.giorgetti@vanderbilt.edu}}
\author[S. Rossi]{Stefano Rossi}
\address{Stefano Rossi\\
Dipartimento di Matematica \\
Universit\`{a} di Roma Tor Vergata\\
Via della Ricerca Scientifica 1, Roma 00133, Italy} \email{{\tt
rossis@mat.uniroma2.it}}
\date{\today}

\keywords{Operator Algebras, Noncommutative Torus, Skew-Product, Ergodic Dynamical Systems, Unique Ergodicity}
\subjclass[2010]{37A55, 58B34, 58J51, 46L55, 46L65}

\begin{abstract}
\vskip0.1cm\noindent
We provide a systematic study of a noncommutative extension of the classical Anzai skew-product for the cartesian product of two copies of the unit circle to the noncommutative 2-tori. In particular, some relevant ergodic properties are proved for these quantum dynamical systems, extending the corresponding ones enjoyed by the classical Anzai skew-product. 

As an application, for a uniquely ergodic Anzai skew-product 
$\F$ on the noncommutative $2$-torus $\ba_\a$, $\a\in\br$, we investigate the pointwise limit, $\lim_{n\to+\infty}\frac1{n}\sum_{k=0}^{n-1}\l^{-k}\F^k(x)$, for $x\in\ba_\a$ and $\l$ a point in the unit circle, and show that there are examples for which the limit does not exist, even in the weak topology.
\end{abstract}

\maketitle

\section{introduction}
\label{sec1}

The systematic study of noncommutative aspects of many branches of mathematics has seen an impetuous growth in the last decades, not least because of their applications to quantum physics.

Among these aspects, the analysis of the ergodic properties of noncommutative dynamical systems, which are in fact a natural framework for the description of quantum physical systems, is undoubtedly worth mentioning.

The investigation of ergodic properties of classical dynamical systems was at first motivated by the problem of justifying the thermodynamical laws from the microscopic principles of statistical mechanics, {\it i.e.} the so-called ergodic hypothesis. 

Given a classical dynamical system $(X,T,\m)$, where $X$ is a compact topological space, $T:X\rightarrow X$ a continuous map, and $\m$ an invariant probability measure under the action of $T$, classical ergodic theory primarily deals with the study of the long-time behaviour of the Cesaro means ({\it i.e.} ergodic~averages)
\begin{equation}
\label{anl}
M_{f,\l}(n):=\frac1{n}\sum_{k=0}^{n-1}\l^{-k}f\circ T^{k}\,,\quad n\in\bn\,,
\end{equation}
$|\l|=1$, of continuous functions or more generally of any measurable function $f$.

Among the most celebrated classical ergodic theorems, we mention the Birkhoff individual ergodic theorem, which concerns the study of the pointwise limit
$\lim_{n\to+\infty}M_{f,1}(n)(x)$, $x\in X$ when $f$ is summable, and the von Neumann mean 
ergodic theorem, which concerns the limit 
$L^2\!-\!\lim_{n\to+\infty}M_{f,1}(n)$ when $f$ is square-summable. 

The uniform limit of the averages $M_{f,\l}(n)$ was systematically investigated in \cite{R}, showing convergence for $\l$ in a particular subset of the 
unit circle depending on $T$, and examples of non-convergence when $\l$ belongs to its complement.

The amount of results obtained in the commutative setting is far too vast to provide an exhaustive account. A standard reference is
\cite{CFS}. We refer to \cite{Fu1} for several unconventional ergodic theorems, which also play a fundamental role in number theory. 

Among the most studied examples, Anzai skew-products, which made their first appearance in \cite{A}, are a particularly  fine class of dynamical systems, insofar as they are sufficiently general to provide a wide gamut of phenomena, and yet sufficiently restricted to be fully treated. 

Notably, they are automatically uniquely ergodic provided that they are ergodic, but
they may well fail to be so while being minimal.
They are also a major source of inspiration for the present work, which aims to exhibit similar examples to cover genuinely noncommutative situations,
where decidedly fewer examples are known.  

In the framework of quantum physics, it is then natural to address the systematic study of the ergodic properties of quantum ({\it i.e.} noncommutative) dynamical systems. As a matter of fact, the situation in the quantum setting is rather more involved than the classical situation in several respects. For instance, all statements must be provided in terms of the dual concept of ``functions'' instead of ``points'', since the latter are meaningless in the quantum cases.

As for the literature on noncommutative ergodic theory, 
the reader is firstly referred to the seminal paper \cite{NSZ} for a thorough study of the multiple correlations and quantum 
(weak) mixing associated with invariant states with central support in the bidual. We also cite \cite{LongoPel}, where general results on $C^*$-dynamical systems, {\it e.g.} topological transitivity and minimality, are provided. Some natural generalisations of quantum ergodic theory are investigated in a series of  papers \cite{F, F1, F2, F3, F4} without assuming in general the centrality of the support of the involved states, as is done in \cite{NSZ} instead,
whereas the reader is referred to \cite{BF2, CrF, CrF2, F16} for some direct applications to physics and quantum probability. 

Another motivation to study noncommutative algebras comes from Connes' theory of noncommutative geometry, in which
a noncommutative $C^*$-algebra is conceived as the algebra of ``continuous functions" on a certain noncommutative manifold.

Many properties of the manifold under consideration can then be encoded in what is known as a spectral triple, that is the datum of a (possibly noncommutative) $C^*$-algebra represented on some Hilbert space and of an unbounded operator, called the Dirac operator, describing the metric properties of the (classical or quantum) manifold. 
For a comprehensive treatise on noncommutative geometry, we refer the reader to the monograph \cite{C}, the expository paper \cite{CPR}, and the literature cited therein.

It is no coincidence that one of the most studied examples in noncommutative geometry is the noncommutative 2-torus $\ba_\a$ (see {\it e.g.} \cite{B}), since it is a rather simple model, albeit highly nontrivial. It is related to the discrete Canonical Commutation Relations, and it can be considered as a quantum deformation of the classical $2$-torus $\bt^2$, with deformation parameter given by the angle $2\pi\a$, 
$\a\in\mathbb{R}$, entering in the definition of the symplectic form involved in the construction. 

On the $C^*$-algebra $\ba_\a$ describing the noncommutative 2-torus, there is always a faithful trace-state 
$\t$, generalising the Haar-Lebesgue measure $\di\th_1\di\th_2/(2\pi)^2$.
When $\a$ is irrational, the $C^*$-algebra $\ba_\a$ is simple and admits a unique trace, necessarily coinciding with $\t$, whose GNS representation leads to the hyperfinite Murray-von Neumann type $\ty{II_1}$ factor. If $\a$ is rational, $\ba_\a$ is strongly Morita equivalent to the abelian $C^*$-algebra $C(\bt^2)$ of the classical 2-torus $\bt^2$. 

In the setting of noncommutative geometry, type $\ty{III}$ representations of quantum manifolds can be produced by considering twisted spectral triples, see \cite{CM}, which can obtained, {\it e.g.} by deforming ordinary ones by using the Tomita modular operator. 

For the noncommutative 2-torus, we mention the papers \cite{FS, FH}, which focus on the explicit construction type $\ty{III}$ representations of this sort  with the corresponding modular spectral triples, and the basic Fourier analysis for such models.

What the present work aims to do is study a natural generalisation of the Anzai skew-product to the noncommutative 2-torus $\ba_\a$, first introduced by Osaka and Phillips in \cite{OP} with a different name, so as to come up with 
a variety of novel quantum dynamical systems for which  notable ergodic properties can be studied in full detail. 

Since $\ba_\a$ can also be described via the crossed-product construction $C(\bt)\rtimes_{r_{\a}}\bz$ of $C(\bt)$ by the action of the dual group $\widehat{\bt}\sim\bz$ through all powers of the rotations 
$(r_\a f)(z)=f(e^{2\pi\imath\a}z)$ of the angle $2\pi\a$, the construction of the Anzai skew-product for the noncommutative 2-torus should be understood as a first nontrivial, possibly the simplest, generalisation of the classical Anzai skew-product which corresponds to dynamical systems involving merely the classical cartesian product $\bt\times\bt$.

For the reader's convenience, the definition of a noncommutative Anzai skew-product is recalled in full detail in Section \ref{sec3}.
Here we limit ourselves to saying that, associated with an angle $\th$ and a continuous function 
$f:\bt\rightarrow\bt$, there is an automorphism $\F_{\th,f}:\ba_\a\rightarrow\ba_\a$, which is defined on the unitary generators $U$, $V$ of $\mathbb{A}_\alpha$ by
$$
\F_{\th,f}(U)=e^{\imath \th}U\,,\quad \F_{\th,f}(V)=f(U)V\,,
$$
The map $\F_{\th,f}$ is shown to be a $*$-automorphism of $\ba_\a$ leaving the canonical trace $\t$ invariant. We also note that, if $\a=0$ and thus $\ba_\a\sim\bt^2$, such definition coincides with the original one \cite{A}.

Section \ref{sec4} is devoted to the study of the ergodic properties of the above Anzai skew-product. In analogy with what happens in the classical case, if for the angle $\th$ with $\th/2\pi$ irrational, we show that the $C^*$-dynamical system $\big(\ba_\a, \F_{\th,f},\t\big)$ is ergodic if and only if a sequence of cohomological equations, one for each $n\in\bz\smallsetminus\{0\}$ (depending on the deforming angle $\a$) has no non-null measurable solutions. Furthermore, we prove that ergodicity always implies unique ergodicity in compliance with the classical case \cite{Fu}. We then have new nontrivial quantum examples of $C^*$-dynamical systems exhibiting very strong ergodic properties. The reader is referred to \cite{F1} and \cite{FB} for an exhaustive discussion about new aspects, and new examples arising from free probability of these $C^*$-dynamical systems enjoying such a very strong ergodic behaviour, respectively.

However, the strategy of the proof of this last result must be significantly changed as the original proof relies on the notion of generic point (see Lemma 2.1 in \cite{Fu}), which is no longer available  in the noncommutative setting.
Moreover, our proof applies to classical non-separable cases as well, which are dealt with in Section \ref{sec5}. 

Section  \ref{sec6} is devoted to answering a question raised in \cite{F4} whether there exists a genuinely noncommutative uniquely ergodic dynamical system 
for which some weighted Cesaro means, \emph{cf.} \eqref{anl}, fail to converge.

More precisely, suppose we have a uniquely ergodic $C^*$-dynamical system $(\ga,\F,\f)$. Let $\big(\ch_\f,\pi_\f, V_{\f,\F},\xi_\f\big)$ be the covariant GNS representation. In \cite{F4, R}, the inclusion of the pure-point spectra 
$\s_{\mathop{\rm pp}}(\F)\subset\s_{\mathop{\rm pp}}(V_{\f,\F})$ is shown to be, in general,  proper. 
In addition, as in the classical case described in \cite{R}, the corresponding Cesaro means $M_{a,\l}$ analogous to \eqref{anl} converge pointwise in norm for each 
$\l\in\s_{\mathop{\rm pp}}(\F)\bigcup\s_{\mathop{\rm pp}}(V_{\f,\F})^{\rm c}$, yet fail to converge in general, even in the weak topology, for $\l\in\s_{\mathop{\rm pp}}(V_{\f,\F})\smallsetminus\s_{\mathop{\rm pp}}(\F)$.  This is seen  through a counterexample based on the tensor product construction, Proposition 3 in \cite{F4}.
By using the Anzai skew products, we provide counterexamples to the convergence of  $M_{a,\l}(n)$ for cases based on the crossed-product construction $\ba_\a\sim C(\bt)\rtimes_{r_{\a}}\bz$.

\noindent

\section{preliminaries}
\label{sec2}

\subsection{Notations}

Let $E$ be a normed space. We simply denote by $\|\,{\bf\cdot}\,\|$ its norm whenever no confusion arises. In particular, $\|x\|\equiv\|x\|_\ch$ will be the Hilbertian norm of an element $x$ in the Hilbert space $\ch$. For a continuous or measurable function $f$ defined on the locally compact space $X$ equipped with the Radon measure $\m$, $\|f\|\equiv\|f\|_\infty$ will denote the ``esssup'' norm (or just the "sup" norm  for $f$ continuous) of the function $f$. 

In such a situation, $C_{\rm b}(X)$ will denote the $C^*$-algebra consisting of all bounded continuous functions defined on $X$, equipped with the natural algebraic operations, and norm 
$\|f\|\equiv\|f\|_\infty:=\sup_{x\in X}|f(x)|$. The situation of a point-set $X$, together with the $C^*$-algebra $\cb(X)=C_{\rm b}(X)$ of all bounded functions defined on $X$, is reduced to 
a particular case of the previous one by considering $X$ equipped with the discrete topology.

\vskip.3cm

Let $\bt:=\{z\in\bc\mid |z|=1\}$ be the abelian group consisting of the unit circle. It is homeomorphic to the interval $[0,2\pi)$ by means of the map $[0,2\pi)\ni\a\mapsto e^{\imath\a}\in\bt$, after identifying the end-points $0$ and $2\pi$. We equivalently use both descriptions without further mention.
Henceforth, for $S\subset\bt$, $S^{\rm c}$ will denote $\bt\smallsetminus S$.

The dual topological group $\widehat{\bt}$ is isomorphic to the discrete group $\bz$. The corresponding Haar measures are the normalised Lebesgue measure $\di{\rm m}=\frac{\di z}{2\pi\imath z}=\frac{\di \th}{2\pi}$ for $z=e^{\imath\th}$ on the circle, and the counting measure on $\bz$, respectively. 

The Fourier transforms of a bounded signed Radon measure $\m\in C(\bt)^*$, and a summable function $f\in L^1(\bt,{\rm m})$, are respectively defined as
$$
\widehat{\m}(n):=\int_\bt z^{-n}\di\m(z)\,,\,\,\, \widehat{f}(n):=\oint f(z)z^{-n}\frac{\di z}{2\pi\imath z}\,,\quad n\in\bz.
$$

\vskip.3cm

If $\mathpzc{f}:X\rightarrow X$ is a map on a point-set $X$,
\begin{itemize} 
\item[{\bf --}] $\mathpzc{f}^0:=\id_X$, and 
for the $n$-times composition, $\mathpzc{f}^n:=\underbrace{\mathpzc{f}\circ\cdots\circ \mathpzc{f}}_{n-\text{times}}$.
\end{itemize}
Therefore, $\mathpzc{f}^{n}:X\rightarrow X$, $n\in\bn$, defines an action of the monoid $\bn$ on the set $X$. 

If in addition $\mathpzc{f}:X\rightarrow X$ is invertible,
\begin{itemize} 
\item[{\bf --}] for the $n$-times composition of the inverse, $\mathpzc{f}^{-n}:=\underbrace{\mathpzc{f}^{-1}\circ\cdots\circ \mathpzc{f}^{-1}}_{n-\text{times}}$.
\end{itemize}
Therefore, $\mathpzc{f}^{n}:X\rightarrow X$ is meaningful for any $n\in\bz$ and provides an action of the group $\bz$ on the set $X$. 

Concerning the cases under consideration relative to the noncommutative 2-torus, for $\a\in\br$ we denote by $R_\a$ the rotation of the angle $2\pi\a$ acting on the unit circle:
$$
\bt\ni z\mapsto R_\a z:=e^{2\pi\imath\a}z\in\bt\,.
$$

\subsection{$C^*$-dynamical systems}
 
A (discrete) $C^*$-dynamical system is a triple $(\ga,\F,\f)$ consisting of a $C^*$-algebra, a positive map $\F:\ga\rightarrow\ga$, and a state $\f\in\cs(\ga)$ such that $\f\circ\F=\f$, and
$$
\f\big(\F(a)^*\F(a)\big)\leq \f(a^*a)\,,\quad a\in\ga\,.
$$
Consider the Gelfand--Naimark--Segal (GNS for short) representation $\big(\ch_\f,\pi_\f,\xi_\f\big)$, see {\it e.g.}~\cite{Sa}. 
Then there exists a unique linear contraction $V_{\f,\F}\in\cb(\ch_\f)$ such that $V_{\f,\F}\xi_\f=\xi_\f$ and
$$
V_{\f,\F}\pi_\f(a)\xi_\f=\pi_\f(\F(a))\xi_\f\,,\quad a\in\ga\,.
$$
The quadruple $\big(\ch_\f,\pi_\f, V_{\f,\F},\xi_\f\big)$ is called the {\it covariant GNS representation} associated with the triple $(\ga,\F,\f)$.

If $\F$ is multiplicative, hence a $*$-homomorphism, then $V_{\f,\F}$ is an isometry with final range $V_{\f,\F}V_{\f,\F}^*$, the orthogonal projection onto the subspace $\overline{\pi_\f(\F(\ga))\xi_\f}$, see {\it e.g.} \cite{NSZ}, Lemma 2.1.

For the $C^*$-dynamical system $(\ga,\F,\f)$, the case when $\ga$ is a unital $C^*$-algebra with unity \mbox{$\idd\equiv\idd_\ga$}, and $\F$ is multiplicative and identity-preserving, {\it i.e.} a unital $*$-homomorphism, is of primary importance. 
Indeed, denote by $\ga^\F:=\big\{a\in\ga\mid\F(a)=a\big\}$ the fixed-point subalgebra, and
$$
\s^{\mathop{\rm ph}}_{\mathop{\rm pp}}(\F):=\big\{\l\in\bt\mid \l\,\text{is an eigenvalue of}\,\F\big\}
$$
the set of the peripheral eigenvalues of $\F$ ({\it i.e. the peripheral pure-point spectrum}) with $\ga_\l$ the relative eigenspaces. Obviously, $\idd\in\ga^\F=\ga_1$.

For $\xi\in\ch_\f$ and $n\in\bz$, consider the sequence
$$
\widehat{\m_\xi}(n):=\bigg\{\begin{array}{ll}
          \langle V^n_{\f,\F}\xi,\xi\rangle& \text{if}\,\, n\geq0\,, \\[1ex]

       \overline{ \langle V^{-n}_{\f,\F}\xi,\xi\rangle} & \text{if}\,\, n<0\,.
         \end{array}
         \bigg.
$$
It is easily seen that the sequence $\big\{\widehat{\m_\xi}(n)\big\}_{n\in\bz}$ is positive definite, and therefore it is the Fourier transform of a positive bounded Radon measure $\m_\xi$ on the unit circle $\bt$, see {\it e.g.} \cite{F4}, Proposition 1.

Equally well, we can consider the peripheral pure-point spectrum 
$$
\s^{\mathop{\rm ph}}_{\mathop{\rm pp}}(V_{\f,\F}):=\big\{\l\in\bt\mid \l\,\text{is an eigenvalue of}\,V_{\f,\F}\big\}
$$ 
of $V_{\f,\F}$.\footnote{If $\F:\ga\rightarrow\ga$ is a $*$-automorphism leaving $\f\in\cs(\ga)$ invariant, $\s(\F)$ and $\s(V_{\f,\F})$ belong to $\bt$. Therefore, 
$\s^{\mathop{\rm ph}}_{\mathop{\rm pp}}(\F)=\s_{\mathop{\rm pp}}(\F)$ and
$\s^{\mathop{\rm ph}}_{\mathop{\rm pp}}(V_{\f,\F})=\s_{\mathop{\rm pp}}(V_{\f,\F})$.}
Denote with $E^{\f,\F}_\l\in\cb(\ch_\f)$ the orthogonal projection onto the eigenspace generated by the eigenvectors associated with $\l\in\bt$, with the convention $E^{\f,\F}_\l=0$ if 
$\l\notin\s^{\mathop{\rm ph}}_{\mathop{\rm pp}}(V_{\f,\F})$. With an abuse of language, $\m_\xi$ is the spectral measure of $V_{\f,\F}$ relative to $\xi\in\ch_\f$. 
Therefore, if~$\l=e^{-\imath\th}\in\bt$, then
$\m_\xi(\{\th\})=\|E^{\f,\F}_\l\xi\|^2$.

The $C^*$-dynamical system $(\ga,\F)$ made of a unital $C^*$-algebra $\ga$ and an identity-preserving completely positive map $\F:\ga\rightarrow\ga$ is said to be:
\begin{itemize}
\item[-] {\it topologically ergodic} if $\ga^{\F}=\bc\idd_\ga$;
\item[-] {\it minimal} ({\it cf.} \cite{LongoPel}) if $\ga$ itself is the unique nontrivial $\F$-invariant hereditary $C^*$-subalgebra of $\ga$;
\item[-] {\it uniquely ergodic} if there exists only one invariant state $\f$ for the dynamics induced by $\F$. 
\end{itemize}
For a uniquely ergodic $C^*$-dynamical system, we simply write $(\ga,\F,\f)$, where $\f\in\cs(\ga)$ is the unique invariant state. 

Since we are interested in $*$-automorphisms, we specialise the situation to the case when $\F$ is a unital $*$-homomorphism of the unital $C^*$-algebra $\ga$.

The commutative case can be considered as a particular case of the previous ones. Namely, with an abuse of notation, a classical $C^*$-dynamical system will be a pair $(X,T)$ consisting of a 
locally compact Hausdorff space $X$ and a continuous map $T:X\rightarrow X$. An invariant state will be an invariant Radon probability measure $\m$ on $X$, that is $\m\circ T^{-1}=\m$. The unital case corresponds to $X$ being compact, whereas the automorphic action cases correspond to $T$ being a homeomorphism. In this situation, the underlying $C^*$-algebra $\ga$, the positive map
$\F$ and the invariant state $\om$ correspond to $C_o(X)$, the set of all continuous functions vanishing at infinity, $\F(f)=f\circ T$, and $\om(f)=\int_Xf\di\m$ for $f\in C_o(X)$, respectively.

Concerning the classical unital cases, for a compact Hausdorff space $X$, a homeomorphism $T:X\rightarrow X$ is called minimal if the unique nontrivial closed set of $X$, invariant under $T$, is $X$ itself.
It is called uniquely ergodic if it admits a unique invariant probability measure $\m\in C(\bt)^*_+$. 

When we deal with $C^*$-dynamical systems associated with abelian $C^*$-algebras as before, we simply refer to a {\it dynamical system}.

\subsection{The noncommutative 2-torus}

We fix the noncommutative 2-torus based on the deformation of the classical 2-torus $\bt^2$, corresponding to the rotation by the angle $2\pi\a$.

Indeed, for a fixed $\a\in\br$, the {\it noncommutative torus} $\ba_{\a}$ associated with the rotation by the angle $2\pi\a$, is the universal $C^*$-algebra with identity $I=\idd_{\ba_\a}$ generated by the commutation relations involving two unitary indeterminates $U,V$:
\begin{equation}
\label{ccrba}
\begin{split}
&UU^*=U^*U=I=VV^*=V^*V,\\
&UV=e^{2\pi\imath\a}VU.
\end{split}
\end{equation}
Such a $C^*$-algebra $\ba_\a$ has a canonical faithful tracial state $\t$, defined on the  total set $\{U^mV^n\mid m,n\in\bz\}\subset\ba_\a$ by
$$
\t(U^mV^n):=\d_{m,0}\d_{n,0}\,,\quad m,n\in\bz\,.
$$
Since the abelian $C^*$-algebra $C^*(U,I)$ provides a copy of $C(\bt)$ inside $\ba_\a$ ({\it cf.} \cite{D}), $\ba_\a$ can be viewed as a crossed-product of $C(\bt)$ by the action of the dual group $\widehat{\bt}\sim\bz$ through the (all powers of the) rotations of the angle $2\pi\a$: $\ba_\a\sim C(\bt)\rtimes_{r_{\a}}\bz$, 
where, with an abuse of notations, $r_{\a}g(z)=g(R_\a z)=g(e^{2\pi\imath\a}z)$, $g\in C(\bt)$.

Let $\big(\ch_\t,\pi_\t,\xi_\t\big)$ be the GNS representation associated with the canonical trace $\t$. We consider the sequence of vectors $\{e_{mn}\mid m,n\in\bz\}\subset\ch_\t$, where
$$
e_{mn}:=\pi_\t\big(U^mV^n\big)\xi_\t\,,\quad  m,n\in\bz\,.
$$
The above sequence is made of orthonormal vectors: 
$$
\langle e_{kl},e_{mn}\rangle=\t\big(V^{-n}U^{k-m}V^l\big)=\d_{k,m}\d_{l,n}\,,
$$
which is indeed a basis of $\ch_\t$.

Among the most relevant properties of the noncommutative 2-torus, we mention the equivalence of the following statements:
\begin{itemize}
\item[(i)] $\a$ is irrational,
\item[(ii)] $\ba_\a$ is simple,
\item[(iii)] $\ba_\a$ has a unique trace, which necessarily coincides with $\t$,
\end{itemize}
see {\it cf.} \cite{B}, Theorem 1.10. In addition,
$$
\ba_{\a_1}\sim\ba_{\a_2}\iff \a_1=\pm\a_2\,\,{\rm mod}\,\,\bz\,.
$$

When $\a$ is irrational, the GNS representation $\pi_\t$ generates the hyperfinite Murray-von Neumann type $\ty{II_1}$ factor.

When $\a$ is rational instead, it is known that $\ba_\a$ is strongly Morita equivalent to $\ba_0\sim C(\bt^2)$, see {\it e.g.} \cite{Ri}.\footnote{Two $C^*$-algebras $\ga$ and $\gb$ are said to be {\it strongly Morita equivalent} if there exist a Hilbert $(\gb,\ga)$-bimodule $E$, and a Hilbert $(\ga,\gb)$-bimodule $F$ such that
$$
E\otimes_\ga F\cong\gb\,,\,\,\,F\otimes_\gb E\cong\ga\,.
$$}

The maps $\r^{(o)}$ are useful in the sequel. They are defined on the generators by
\begin{equation}
\label{rozero}
\r^{(o)}_{z,w}(U)=zU\,,\quad \r^{(o)}_{z,w}(V)=wV\,,\quad z,w\in\bt\,,
\end{equation}
and extend to $*$-automorphisms $\r^{(o)}_{z,w}:\ba_\a\rightarrow\ba_\a$.
In addition, the $\r^{(o)}_{z,w}$ leave the canonical trace $\t$ invariant. 

It is matter of routine to check that 
$$
E_U(x):=\oint\r^{(o)}_{1,z}(x)\frac{\di z}{2\pi\imath z}\,,\quad x\in\ba_\a
$$ 
is a conditional expectation onto $C^*(U,I)\sim C(\bt)$. 
Therefore, each $x\in\ba_\a$, admits an expansion as
\begin{equation}
\label{exfp}
x=\sum_{n\in\mathbb{Z}}c_{n, x}(U)V^n\,,
\end{equation}
where $c_{n, x}\in C(\bt)$ is the unique function such that $c_{n, x}(U)= E_U(xV^{-n})$, and the convergence of the series is understood in norm in the sense of Fej\'er, see {\it e.g.} \cite{D}, Theorem VIII.2.2. 
We also note that $x=0$ if and only if the $c_{n, x}$ are all identically zero.

\subsection{The Anzai skew-product on $\bt^2$}

We recall the definition and the main ergodic properties of the so-called {\it Anzai skew-product} 
$$
\bt\times\bt\ni (s,t)\mapsto T_{\th,h}(s,t)\in\bt\times\bt
$$
associated with the angle $2\pi\th$, where we are tacitly assuming that $\th\in(0,1)$ is irrational, and the continuous function $h:\bt\rightarrow\bt$. Such a map is defined as
\begin{equation}
\label{Anzai}
T_{\th,h}(s,t):=\big(R_\th s, h(s)+ t\big)\,,\quad s,t\in[0,2\pi)\,,
\end{equation}
and the arithmetic is understood mod-$2\pi$.

It is possible to show (see below, or \cite{Fu} for the case when $X$ is a compact metric space) that the Anzai skew-product $T_{\th,h}$ is uniquely ergodic, provided the dynamical system
$\big(\bt^2,T_{\th,h},\di s\di t/4\pi^2\big)$ 
is ergodic.

Consider the {\it cohomological equation}
\begin{equation}
\label{Anzai1}
nh(s)=g(R_\th s)-g(s)\,,\quad n\in\bz\smallsetminus\{0\}
\end{equation}
$\di s/2\pi$-{\it a.e.}, where $g$ is a measurable function.

In \cite{Fu}, it was shown that an Anzai skew-product \eqref{Anzai} is minimal (respectively uniquely ergodic), if and only if the cohomological equation \eqref{Anzai1} has no continuous
(respectively measurable) solutions for any nonzero $n\in\bz$. In particular, any uniquely ergodic Anzai skew-product is necessarily minimal. On the other hand, it is easy to exhibit a uniquely ergodic homeomorphism which is not minimal by considering the one-point compactification $X=\bz\bigsqcup\{\infty\}$ of the integers, and for $T$ the one-step shift (with the convention $T(\infty)=\infty$). 

In \cite{Fu}, an Anzai skew-product was also constructed for which the cohomological equation admits only nontrivial measurable solutions, that is it is minimal but not uniquely ergodic.

\subsection{Uniform convergence of Cesaro means for uniquely ergodic dynamical systems}

For the sake of completeness, we collect some standard but crucial results ({\it cf.} \cite{F4, R}) useful in the sequel.
\begin{prop}
\label{gnsc}
Let $(\ga,\F,\f)$ be a uniquely ergodic $C^*$-dynamical system, and $\{\om_n\}_{n\in\bn}\subset\cs(\ga)$ any sequence of states. Then the following assertions hold true.
\begin{itemize}
\item[(i)] $\s^{\mathop{\rm ph}}_{\mathop{\rm pp}}(\F)$ is a subgroup of $\bt$, and the corresponding eigenspaces $\ga_\l$, 
$\l\in\s^{\mathop{\rm ph}}_{\mathop{\rm pp}}(\F)$, have dimension one and are generated by a single unitary~$u_\l$.
\item[(ii)] $\s^{\mathop{\rm ph}}_{\mathop{\rm pp}}(\F)\subset\s^{\mathop{\rm ph}}_{\mathop{\rm pp}}(V_{\f,\F})$.
\item[(iii)] For each $a\in\ga$ and $\l=e^{-\imath\th}$,
$$
\m_{\pi_\f(a)\xi_\f}(\{\th\})^{1/2}\geq\limsup_n\frac1{n}\bigg|\sum_{k=0}^{n-1}\l^{-k}\om_n(\F^k(a))\bigg|\,.
$$
\end{itemize}
\end{prop}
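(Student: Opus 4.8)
The plan is to establish the three assertions in sequence, exploiting the rigidity that unique ergodicity imposes on the peripheral point spectrum of $\F$.

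\medskip

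\textbf{Part (i).} First I would show that $\s^{\mathop{\rm ph}}_{\mathop{\rm pp}}(\F)$ is a group. If $\F(u)=\l u$ and $\F(w)=\mu w$ with $u,w$ eigenvectors, then $\F(uw)=\l\mu\, uw$ and $\F(u^*)=\bar\l u^*$, so the spectrum is closed under products and inverses; one only needs that the eigenvectors are nonzero, and more precisely that they can be taken unitary. For this, note that if $\F(a)=\l a$ with $\l\in\bt$, then $\F(a^*a)=a^*a$, so $a^*a$ is a fixed point; since $\F$ is a unital $*$-homomorphism and $\f$ is an invariant state, the fixed-point algebra of a uniquely ergodic system is trivial — indeed, if $b=b^*$ is $\F$-fixed and not scalar, then both $\f(\,\cdot\,)$ and $\f(f(b)\,\cdot\,)/\f(f(b))$ (for suitable continuous $f\ge 0$ on $\sp(b)$ supported near one point of $\sp(b)$) would be distinct invariant states, contradicting unique ergodicity. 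Hence $\ga^\F=\bc\idd$, so $a^*a$ is a scalar multiple of $\idd$, and normalising we get a unitary $u_\l$ with $\F(u_\l)=\l u_\l$. Uniqueness up to phase (hence one-dimensionality of $\ga_\l$): if $\F(u)=\F(v)=\l u=\l v$ with $u,v$ unitary, then $u^*v$ is $\F$-fixed, hence scalar, so $v\in\bc u$.

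\medskip

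\textbf{Part (ii).} Given $\l\in\s^{\mathop{\rm ph}}_{\mathop{\rm pp}}(\F)$ with eigen-unitary $u_\l$, I would simply push the eigenvalue equation into the GNS space: the vector $\eta:=\pi_\f(u_\l)\xi_\f$ satisfies $V_{\f,\F}\eta=V_{\f,\F}\pi_\f(u_\l)\xi_\f=\pi_\f(\F(u_\l))\xi_\f=\l\,\pi_\f(u_\l)\xi_\f=\l\eta$ by the defining property of $V_{\f,\F}$. Since $u_\l$ is unitary, $\|\eta\|^2=\f(u_\l^*u_\l)=1\neq 0$, so $\l$ is genuinely an eigenvalue of $V_{\f,\F}$, proving the inclusion.

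\medskip

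\textbf{Part (iii).} This is the quantitative heart of the statement and I expect it to be the main obstacle, because one must control \emph{arbitrary} sequences of states $\om_n$, not just $\f$. The idea: for fixed $a\in\ga$ and $\l=e^{-\imath\th}$, set $\xi:=\pi_\f(a)\xi_\f$. The spectral measure $\m_\xi$ of $V_{\f,\F}$ relative to $\xi$ has $\m_\xi(\{\th\})=\|E^{\f,\F}_\l\xi\|^2$. Decompose $a$ (or rather $\pi_\f(a)\xi_\f$) via the spectral projection: informally, write $a = \langle \text{peripheral-}\l\text{ part}\rangle + \text{rest}$, where the $\l$-part, using Part (i), is $c\,u_\l$ for a scalar $c$ with $|c|^2 = \|E^{\f,\F}_\l \pi_\f(u_\l)\xi_\f\,\cdot\,\text{(projection of }a)\|$ — more carefully, $E^{\f,\F}_\l\xi = c\,\pi_\f(u_\l)\xi_\f$ with $|c| = \|E^{\f,\F}_\l\xi\| = \m_\xi(\{\th\})^{1/2}$. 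Then $\F^k(a) = \F^k(c\,u_\l) + \F^k(b) = c\,\l^k u_\l + \F^k(b)$ where $b=a-c\,u_\l$, so $\l^{-k}\F^k(a) = c\,u_\l + \l^{-k}\F^k(b)$, giving
$$
\frac1n\sum_{k=0}^{n-1}\l^{-k}\om_n(\F^k(a)) = c\,\om_n(u_\l) + \frac1n\sum_{k=0}^{n-1}\l^{-k}\om_n(\F^k(b)).
$$
The first term is bounded in modulus by $|c| = \m_\xi(\{\th\})^{1/2}$ since $\|u_\l\|=1$. The crux is then to show the second term vanishes as $n\to\infty$, uniformly over $\om_n$; this is exactly the statement that $\l$ is \emph{not} a peripheral eigenvalue of $V_{\f,\F}$ when restricted to $b$, i.e.\ $E^{\f,\F}_\l\pi_\f(b)\xi_\f=0$, combined with a uniform-convergence argument of Weyl type. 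The standard mechanism (as in \cite{R, F4}) is: unique ergodicity forces $\frac1n\sum_{k=0}^{n-1}\l^{-k}\F^k(b)\to 0$ in norm whenever $b$ lies in the closed span of $\{\F(x)-\mu x : x\in\ga,\ \mu\neq\l,\ |\mu|=1\}$ plus a "flight" part, and one then estimates $|\om_n(\cdot)|\le\|\cdot\|$ uniformly. The delicate point is handling the non-eigenvalue part of the spectrum of $V_{\f,\F}$ on the GNS level versus the algebra level — the reduction requires that the Cesàro averages $\frac1n\sum_{k=0}^{n-1}\l^{-k}\pi_\f(\F^k(b))\xi_\f$ converge to $E^{\f,\F}_\l\pi_\f(b)\xi_\f = 0$ in $\ch_\f$ (von Neumann ergodic theorem for the contraction $\l^{-1}V_{\f,\F}$), and then a standard argument upgrading Hilbert-space convergence of the "smeared" vector to norm convergence in the algebra, using that $\f$ is the \emph{unique} invariant state so that $\|\frac1n\sum \l^{-k}\F^k(b)\|\to 0$ (otherwise a weak-$*$ limit point of suitable states would be invariant, not killed by the averaging — this is where unique ergodicity is essential and where the classical "generic point" argument must be replaced). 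Once that norm estimate is in hand, $|\om_n(\frac1n\sum_{k}\l^{-k}\F^k(b))|\le\|\frac1n\sum_k\l^{-k}\F^k(b)\|\to 0$ regardless of the $\om_n$, and taking $\limsup_n$ in the displayed identity yields $\limsup_n|\frac1n\sum_{k=0}^{n-1}\l^{-k}\om_n(\F^k(a))|\le |c| = \m_{\pi_\f(a)\xi_\f}(\{\th\})^{1/2}$, as desired.
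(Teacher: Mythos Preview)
The paper itself does not prove this proposition: it is presented as a collection of ``standard but crucial results ({\it cf.}~\cite{F4, R})'' followed immediately by a $\square$, so there is no in-paper argument to compare against. Your Parts (i) and (ii) are essentially correct. One small repair in (i): the functional $x\mapsto\f(f(b)x)/\f(f(b))$ need not be positive when $\f$ is not tracial; the clean way to get two distinct invariant states from a non-scalar fixed $b=b^*$ is to take two states on the commutative $C^*(b,\idd)$ differing at $b$, extend them to $\ga$, and take weak-$*$ limit points of their unweighted Ces\`aro averages along $\F$, which are invariant and still differ at $b$.

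Part (iii), however, has a genuine gap. Your decomposition $a=c\,u_\l+b$ presupposes a unitary $u_\l\in\ga_\l$, which exists only when $\l\in\s^{\mathop{\rm ph}}_{\mathop{\rm pp}}(\F)$; the inequality must hold for \emph{every} $\l\in\bt$, in particular for $\l\in\s^{\mathop{\rm ph}}_{\mathop{\rm pp}}(V_{\f,\F})\smallsetminus\s^{\mathop{\rm ph}}_{\mathop{\rm pp}}(\F)$, where the $\l$-eigenvector lives only in $\ch_\f$ and no algebraic splitting is available. Even when the splitting makes sense, the step you lean on --- that $\big\|\frac1n\sum_k\l^{-k}\F^k(b)\big\|\to 0$ once $E^{\f,\F}_\l\pi_\f(b)\xi_\f=0$ --- is precisely the $\l\notin\s^{\mathop{\rm ph}}_{\mathop{\rm pp}}(V_{\f,\F})$ case of Theorem~\ref{main}, whose proof \emph{uses} (iii); invoking it here is circular. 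Your proposed justification (``a weak-$*$ limit point of suitable states would be invariant'') fails for $\l\neq 1$: any weak-$*$ limit $\chi$ of the functionals $\frac1n\sum_k\l^{-k}\psi\circ\F^k$ satisfies $\chi\circ\F=\l\chi$, not $\chi\circ\F=\chi$, so unique ergodicity tells you nothing about it. Section~\ref{sec6} in fact builds examples with $\l\in\s^{\mathop{\rm ph}}_{\mathop{\rm pp}}(V_{\f,\F})\smallsetminus\s^{\mathop{\rm ph}}_{\mathop{\rm pp}}(\F)$ for which $M_{a,\l}(n)$ fails to converge at all, so the hoped-for ``upgrade from Hilbert-space to norm convergence'' is simply false.

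The argument that does work (and is what \cite{F4} carries out) never decomposes $a$. With $b_n:=M_{a,\l}(n)$, Cauchy--Schwarz gives $|\om_n(b_n)|^2\le\om_n(b_n^*b_n)$. For fixed $p$ one has $b_n=\frac1n\sum_{m=0}^{n-1}\l^{-m}\F^m(b_p)+O(p/n)$, and the operator inequality $\big(\frac1n\sum_m x_m\big)^*\big(\frac1n\sum_m x_m\big)\le\frac1n\sum_m x_m^*x_m$ yields $b_n^*b_n\le\frac1n\sum_{m=0}^{n-1}\F^m(b_p^*b_p)+O(p/n)$. Now $b_p^*b_p$ is a \emph{fixed} element, so \emph{unweighted} unique ergodicity gives $\om_n\big(\frac1n\sum_m\F^m(b_p^*b_p)\big)\to\f(b_p^*b_p)$ uniformly in $\om_n$; finally the mean ergodic theorem in $\ch_\f$ gives $\f(b_p^*b_p)=\big\|\frac1p\sum_j\l^{-j}V_{\f,\F}^j\pi_\f(a)\xi_\f\big\|^2\to\|E^{\f,\F}_\l\pi_\f(a)\xi_\f\|^2=\m_{\pi_\f(a)\xi_\f}(\{\th\})$ as $p\to\infty$. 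Letting $n\to\infty$ and then $p\to\infty$ yields (iii). The point is that the weighted problem is reduced to the unweighted one via the square, which is exactly what your outline is missing.
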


\noindent\hspace{12.33 cm}$\square$

Concerning the uniquely ergodic dynamical systems, in analogy to the classical situation ({\it cf.} \cite{R}), we call the eigenvectors corresponding to $\l\in\s^{\mathop{\rm ph}}_{\mathop{\rm pp}}(\F)$ {\it continuous eigenvectors}, whereas those corresponding  to
$\l\in\s^{\mathop{\rm ph}}_{\mathop{\rm pp}}(V_{\f,\F})\smallsetminus\s^{\mathop{\rm ph}}_{\mathop{\rm pp}}(\F)$ {\it measurable non-continuous eigenvectors}.

For $a\in\ga$ and $\l\in\bt$, consider the Cesaro averages
\begin{equation}
\label{cesmn}
M_{a,\l}(n):=\frac1{n}\sum_{k=0}^{n-1}\l^{-k}\F^k(a)\,,\quad n\in\bn\smallsetminus\{0\}\,.
\end{equation}
For the convenience of the reader, we include the proof of the main result ({\it cf.} \cite{F4, R}) concerning the uniform convergence of the Cesaro means \eqref{cesmn} 
for uniquely ergodic $C^*$-dynamical systems.
\begin{thm}
\label{main}
Let $(\ga,\F,\f)$ be a uniquely ergodic $C^*$-dynamical system. Fix 
$\l\in\s^{\mathop{\rm ph}}_{\mathop{\rm pp}}(\F)\bigcup\s^{\mathop{\rm ph}}_{\mathop{\rm pp}}(V_{\f,\F})^{\rm c}$. Then for each $a\in\ga$,
$$
\lim_nM_{a,\l}(n)=\f(u_\l^*a)u_\l\,,
$$
uniformly, where $u_\l\in\ga_\l$ is any eigenvector corresponding to $\l\in\s^{\mathop{\rm ph}}_{\mathop{\rm pp}}(\F)$ (with the convention that if 
$\l\in\s^{\mathop{\rm ph}}_{\mathop{\rm pp}}(V_{\f,\F})^{\rm c}$, then $u_\l=0$).
\end{thm}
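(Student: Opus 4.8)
The plan is to split the proof into the two cases $\l\in\s^{\mathop{\rm ph}}_{\mathop{\rm pp}}(\F)$ and $\l\in\s^{\mathop{\rm ph}}_{\mathop{\rm pp}}(V_{\f,\F})^{\rm c}$, and in each case combine the general upper bound from Proposition \ref{gnsc}(iii) with a suitable lower bound forcing convergence. First I would dispose of the \emph{continuous} case. Fix $\l=e^{-\imath\th}\in\s^{\mathop{\rm ph}}_{\mathop{\rm pp}}(\F)$ and let $u_\l\in\ga_\l$ be the (up to scalar, by Proposition \ref{gnsc}(i)) unitary eigenvector, so $\F(u_\l)=\l u_\l$, hence $\F^k(u_\l)=\l^k u_\l$. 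For an arbitrary $a\in\ga$ write $a=\f(u_\l^*a)u_\l+b$ with $b:=a-\f(u_\l^*a)u_\l$. Then $M_{a,\l}(n)=\f(u_\l^*a)u_\l+M_{b,\l}(n)$, so it suffices to show $M_{b,\l}(n)\to0$ uniformly. By construction $\f(u_\l^*b)=0$. I would argue that for every state $\om$ on $\ga$ one has $\om(\F^k(u_\l^*b))=\om(\F^k(u_\l)^*\F^k(b))=\bar\l^k\om(u_\l^*\F^k(b))$ is not quite what is needed; rather I apply Proposition \ref{gnsc}(iii) directly to $b$ with the given $\l$ and any sequence of states. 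The key point: the spectral measure $\m_{\pi_\f(b)\xi_\f}$ has no atom at $\th$. Indeed $\pi_\f(u_\l)\xi_\f$ is, by Proposition \ref{gnsc}(ii), an eigenvector of $V_{\f,\F}$ for $\l$, and since $\f(u_\l^*b)=0$ the vector $\pi_\f(b)\xi_\f$ is orthogonal to $\pi_\f(u_\l)\xi_\f$; because $\ga_\l$ is one-dimensional the corresponding eigenspace of $V_{\f,\F}$ at $\l$ is spanned by $\pi_\f(u_\l)\xi_\f$ (this one-dimensionality at $\l$ for uniquely ergodic systems is the content one extracts from Proposition \ref{gnsc}, or is proved en route), so $E^{\f,\F}_\l\pi_\f(b)\xi_\f=0$, i.e.\ $\m_{\pi_\f(b)\xi_\f}(\{\th\})=0$. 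Then Proposition \ref{gnsc}(iii), applied with $\om_n=\om$ for a single state and with $\om_n$ chosen to realise the norm $\|M_{b,\l}(n)\|$ via Hahn--Banach, gives $\limsup_n\|M_{b,\l}(n)\|\le 0$, i.e.\ uniform convergence to $0$.

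Next the \emph{noncontinuous} case $\l\in\s^{\mathop{\rm ph}}_{\mathop{\rm pp}}(V_{\f,\F})^{\rm c}$, where by convention $u_\l=0$ and the claimed limit is $0$. Here, for any $a\in\ga$, the spectral measure $\m_{\pi_\f(a)\xi_\f}$ has no atom at $\th=-\arg\l$ simply because $\l\notin\s^{\mathop{\rm ph}}_{\mathop{\rm pp}}(V_{\f,\F})$ means $E^{\f,\F}_\l=0$ on all of $\ch_\f$, so $\m_{\pi_\f(a)\xi_\f}(\{\th\})=\|E^{\f,\F}_\l\pi_\f(a)\xi_\f\|^2=0$. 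Applying Proposition \ref{gnsc}(iii) again, with $\om_n$ a state realising $\|M_{a,\l}(n)\|$, we get $\limsup_n\|M_{a,\l}(n)\|\le\m_{\pi_\f(a)\xi_\f}(\{\th\})^{1/2}=0$, hence $M_{a,\l}(n)\to0$ uniformly. This case is essentially immediate from the preliminaries.

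The organising idea common to both cases is: reduce to an element $b$ whose GNS vector carries no spectral mass of $V_{\f,\F}$ at the relevant point, and then invoke the estimate $\m_{\pi_\f(b)\xi_\f}(\{\th\})^{1/2}\ge\limsup_n\|M_{b,\l}(n)\|$, which is uniform over states and hence controls the operator norm. I expect the one genuinely delicate step to be the \emph{continuous} case, specifically the verification that subtracting the $u_\l$-component kills the spectral atom at $\th$ — this needs that the eigenspace $\ker(V_{\f,\F}-\l)$ is exactly one-dimensional (not merely that $\ga_\l$ is), which for a uniquely ergodic system follows because a second, independent eigenvector of $V_{\f,\F}$ at $\l$ would, together with $u_\l$, allow one to build two distinct $\F$-invariant states (a standard averaging/Cesàro argument using that $V_{\f,\F}$ implements $\F$ on the cyclic vector), contradicting unique ergodicity. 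Care is also needed that $u_\l$ being a \emph{unitary} makes $\f(u_\l^*a)u_\l$ genuinely the right "projection" and that $\|M_{a,\l}(n)\|=\sup_{\om\in\cs(\ga)}|\om(M_{a,\l}(n))|$ — true for self-adjoint elements and, with the usual $2\times2$-matrix-amplification trick or by passing to $\|x\|=\sup|\om(x)|$ over states on a unital $C^*$-algebra, for general $x$ — so that the scalar estimate in Proposition \ref{gnsc}(iii) upgrades to norm convergence.
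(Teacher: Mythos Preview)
Your treatment of the case $\l\in\s^{\mathop{\rm ph}}_{\mathop{\rm pp}}(V_{\f,\F})^{\rm c}$ coincides with the paper's. For $\l\in\s^{\mathop{\rm ph}}_{\mathop{\rm pp}}(\F)$, however, you take a considerably longer route than the paper, and the justification you give for the key step is not correct.

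The paper's argument for $\l\in\s^{\mathop{\rm ph}}_{\mathop{\rm pp}}(\F)$ is a one-line reduction using multiplicativity of $\F$: since $\F^k(u_\l^*a)=\l^{-k}u_\l^*\F^k(a)$, one has $u_\l^*M_{a,\l}(n)=M_{u_\l^*a,1}(n)$, and the right-hand side converges uniformly to $\f(u_\l^*a)\idd$ by the standard characterisation of unique ergodicity. No spectral information about $V_{\f,\F}$ is needed. You actually wrote down this very computation (``$\om(\F^k(u_\l^*b))=\bar\l^k\om(u_\l^*\F^k(b))$'') and then dismissed it as ``not quite what is needed'' --- it is precisely what is needed, applied to $u_\l^*a$ rather than $u_\l^*b$.

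Your alternative requires $\dim E^{\f,\F}_\l=1$, and the argument you sketch for it does not work: a second $V_{\f,\F}$-invariant unit vector $\eta$ yields the invariant vector state $\langle\pi_\f(\,{\bf\cdot}\,)\eta,\eta\rangle$ (when $V_{\f,\F}$ is unitary), but nothing forces this state to differ from $\f$, so unique ergodicity is not contradicted. The fact $\dim E^{\f,\F}_1=1$ \emph{is} true for uniquely ergodic systems, but the clean proof goes: uniform convergence $M_{a,1}(n)\to\f(a)\idd$ combined with the mean ergodic theorem gives $E^{\f,\F}_1\pi_\f(a)\xi_\f=\f(a)\xi_\f$ on a dense set, whence $E^{\f,\F}_1$ has rank one; then $\dim E^{\f,\F}_\l=1$ follows by conjugating with the unitary $\pi_\f(u_\l)$. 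Note this already uses the $\l=1$ convergence, so your detour ultimately rests on the same fact the paper invokes directly.

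A smaller slip: $\|x\|=\sup_{\om\in\cs(\ga)}|\om(x)|$ is false for non-self-adjoint $x$ (the right-hand side is the numerical radius). This is not fatal --- the inequality $w(x)\ge\|x\|/2$ still upgrades the scalar estimate to norm convergence to zero --- but the identity as stated is wrong, and the $2\times2$-amplification trick does not repair it.
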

\begin{proof}
By (ii) in Proposition \ref{gnsc}, $\s^{\mathop{\rm ph}}_{\mathop{\rm pp}}(\F)\subset\s^{\mathop{\rm ph}}_{\mathop{\rm pp}}(V_{\f,\F})$. Therefore, we can
firstly consider the case $\l\in\s^{\mathop{\rm ph}}_{\mathop{\rm pp}}(\F)$ and take a unitary eigenvector $u_\l\in\ga_\l$. 

Since $\F$ is multiplicative, we have
$$
\f(u^*_\l a)u_\l=u_\l\lim_n\bigg(\frac1{n}\sum_{k=0}^{n-1}\F^k(u^*_\l a)\bigg)
=\lim_n\bigg(\frac1{n}\sum_{k=0}^{n-1}\l^{-k}\F^k(a)\bigg)\,.
$$
Let now $\l\in\s^{\mathop{\rm ph}}_{\mathop{\rm pp}}(V_{\f,\F})^{\rm c}$, and suppose $\frac1{n}\sum_{k=0}^{n-1}\l^{-k}\F^k(a)\nrightarrow0$ uniformly. Then there would exist a sequence of states 
$\{\om_n\}_{n\in\bn}\subset\cs(\ga)$ such that for $\l=e^{-\imath\th}$, $\limsup_n\frac1{n}\bigg|\sum_{k=0}^{n-1}\l^{-k}\om_n\big(\F^k(a)\big)\bigg|>0$. By (iii) in Proposition \ref{gnsc},
$$
\m_{\pi_\f(a)\xi_\f}(\{\th\})^{1/2}\geq\limsup_n\frac1{n}\bigg|\sum_{k=0}^{n-1}\l^{-k}\om_n\big(\F^k(a)\big)\bigg|>0
$$
which contradicts $\l\in\s^{\mathop{\rm ph}}_{\mathop{\rm pp}}(V_{\f,\F})^{\rm c}$.
\end{proof}
\begin{cor}
For a uniquely ergodic $C^*$-dynamical system $(\ga,\F,\f)$, if $\l\in\s^{\mathop{\rm ph}}_{\mathop{\rm pp}}(\F)\smallsetminus\{1\}$, with $x\in\ga_\l$ we get $\f(x)=0$. 
\end{cor}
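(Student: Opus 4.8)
The plan is to reduce the statement to the single defining property $\f\circ\F=\f$ of a $C^*$-dynamical system, so that, somewhat surprisingly, neither unique ergodicity nor Theorem \ref{main} is actually needed. Recall that $x\in\ga_\l$ means precisely $\F(x)=\l x$. Applying the invariant state $\f$ to both sides and using $\f\circ\F=\f$, one gets
$$
\f(x)=\f\big(\F(x)\big)=\f(\l x)=\l\,\f(x)\,,
$$
whence $(1-\l)\f(x)=0$; since $\l\neq1$ by hypothesis, it follows that $\f(x)=0$.

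For the sake of coherence with the rest of the section, the same conclusion can alternatively be extracted from Theorem \ref{main}: for $x\in\ga_\l$, multiplicativity of $\F$ yields $\F^k(x)=\l^k x$, so the Cesaro means $M_{x,\l}(n)$ of \eqref{cesmn} are all equal to $x$; Theorem \ref{main} then forces $x=\f(u_\l^*x)\,u_\l$, and applying $\f$ reduces the claim to $\f(u_\l)=0$ for the unitary generator $u_\l$ of $\ga_\l$ provided by part (i) of Proposition \ref{gnsc} — which is again the one-line computation above, now with $u_\l$ in place of $x$.

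I expect no genuine obstacle. The only points worth keeping in mind are that ``$x\in\ga_\l$'' must be read as $\F(x)=\l x$, and that $\F$-invariance of $\f$ is built into the definition of a $C^*$-dynamical system; consequently the argument is insensitive to whether $\F$ is multiplicative, and the ambient hypotheses (unique ergodicity, $\F$ a $*$-automorphism) serve only to place the statement naturally alongside Theorem \ref{main} rather than to drive its proof.
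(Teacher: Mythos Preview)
Your first argument is correct and is genuinely more elementary than the paper's. The paper does not use the one-line invariance computation $\f(x)=\f(\F(x))=\l\f(x)$; instead it invokes Theorem~\ref{main} with $a=\idd_\ga$: since $\F^k(\idd_\ga)=\idd_\ga$, one has $M_{\idd_\ga,\l}(n)=\big(\tfrac1n\sum_{k=0}^{n-1}\l^{-k}\big)\idd_\ga\to0$ for $\l\neq1$, while Theorem~\ref{main} identifies this limit as $\f(u_\l^*)u_\l$; hence $\f(u_\l^*)=0$, and the conclusion for a general $x=\a u_\l\in\ga_\l$ follows. Your route bypasses Theorem~\ref{main} and Proposition~\ref{gnsc}(i) entirely and, as you note, works for any $\F$-invariant state on any $C^*$-dynamical system, not just the uniquely ergodic case. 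The paper's route, on the other hand, presents the corollary as an immediate by-product of the convergence theorem just proved, which is presumably the intended narrative. Your alternative via $M_{x,\l}(n)\equiv x$ is also fine but, as you observe, ultimately feeds back into the same invariance computation; note that the paper plugs $a=\idd_\ga$ rather than $a=x$ into Theorem~\ref{main}, so even your ``coherent'' variant differs slightly from the original.
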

\begin{proof}
By the previous theorem, we know that 
$$
\f(u_\l^*)u_\l=\lim_{n\to+\infty}M_{\idd_\ga,\l}(n)=0\,,
$$ 
and therefore, for $x=\a u_\l$, $\a\in\bc$, we get 
$$
\f(x)\idd_\ga=\a\big(\f(u_\l^*)u_\l\big)^*u_\l=0\,.
$$
\end{proof}
In \cite{F4}, we can find an example based on the tensor product construction, for which the average \eqref{cesmn} fails to converge, even in the weak topology, for some $a\in\ga$ and
$\l\in\s^{\mathop{\rm ph}}_{\mathop{\rm pp}}(V_{\f,\F})\smallsetminus\s^{\mathop{\rm ph}}_{\mathop{\rm pp}}(\F)$. 

In the present paper, we will construct more complicated examples involving the noncommutative 2-torus (hence based on the crossed-product construction), for which the average \eqref{cesmn}
does not converge either.

\section{the anzai skew-product for the noncommutative torus}
\label{sec3}

We denote by $C(\bt;\bt)$ the set consisting of all continuous functions $f:\bt\rightarrow\bt$.

We fix $f\in C(\bt;\bt)$ and an angle $\th\in\br$ (with the arithmetic understood mod-$2\pi$, and $f=e^{\imath h}$ for $h$ in \eqref{Anzai}). For $U$, $V$ the universal generators of $\ba_\a$, we set
\begin{equation}
\label{nazai}
\F_{\th,f}(U):=e^{i\theta}U\,,\quad \F_{\th,f}(V):=f(U)V\,.
\end{equation}
To the best of our knowledge, the above class of automorphisms appeared for the first time in \cite{OP}, where they
are referred to as Furstenberg transformations. The definition given there, though, slightly differs from our definition, 
which is better suited to the present context. In fact, in the aforementioned paper most of the attention is lavished on
the so-called tracial Rokhlin property, while in this paper it is ergodic properties to be dealt with.

\medskip

There follows a couple of known results whose proofs are nevertheless included for the sake of completeness.  
\begin{prop}
\label{nazai1}
Equation \eqref{nazai} uniquely defines a $*$-automorphism of $\ba_\a$, whose inverse is $\F_{\th,f}^{-1}=\F_{-\th,\overline{f\circ R_{-\th/2\pi}}}$.
\end{prop}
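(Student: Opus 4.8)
The plan is to verify directly that the assignment \eqref{nazai} respects the defining relations \eqref{ccrba} of $\ba_\a$, so that by the universal property it extends to a (necessarily unital) $*$-endomorphism $\F_{\th,f}$; then to exhibit a two-sided inverse, which forces $\F_{\th,f}$ to be a $*$-automorphism. First I would check that the images $\F_{\th,f}(U)=e^{\imath\th}U$ and $\F_{\th,f}(V)=f(U)V$ are unitaries: $e^{\imath\th}U$ is clearly unitary, and $f(U)$ is unitary because $f\in C(\bt;\bt)$ takes values in $\bt$ and $U$ is a unitary with full spectrum (via the continuous functional calculus in the copy $C^*(U,I)\sim C(\bt)$), so $f(U)^*f(U)=\overline{f}(U)f(U)=|f|^2(U)=I$, and likewise $f(U)V$ is a product of unitaries. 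Next I would verify the commutation relation: using $Uf(U)=f(U)U$ (functions of $U$ commute with $U$) together with the braiding $UV=e^{2\pi\imath\a}VU$, one computes
$$
\F_{\th,f}(U)\F_{\th,f}(V)=e^{\imath\th}U f(U)V=e^{\imath\th}f(U)UV=e^{2\pi\imath\a}e^{\imath\th}f(U)VU=e^{2\pi\imath\a}\,\F_{\th,f}(V)\F_{\th,f}(U),
$$
which is exactly \eqref{ccrba} for the images. Hence $\F_{\th,f}$ is a well-defined unital $*$-endomorphism of $\ba_\a$.

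For the inverse, I would guess the candidate $\Psi:=\F_{-\th,\,\overline{f\circ R_{-\th/2\pi}}}$ suggested in the statement and check $\Psi\circ\F_{\th,f}=\id$ on the generators (the reverse composition then follows by the same computation with roles swapped, or by noting that a one-sided inverse endomorphism of a universal $C^*$-algebra on generators is two-sided). On $U$: $\Psi(\F_{\th,f}(U))=\Psi(e^{\imath\th}U)=e^{\imath\th}\,\Psi(U)=e^{\imath\th}e^{-\imath\th}U=U$. On $V$:
$$
\Psi(\F_{\th,f}(V))=\Psi\big(f(U)V\big)=f(\Psi(U))\,\Psi(V)=f(e^{-\imath\th}U)\,\overline{f\circ R_{-\th/2\pi}}(e^{-\imath\th}U)\,V,
$$
and since $R_{-\th/2\pi}$ is multiplication by $e^{-\imath\th}$, the factor $f(e^{-\imath\th}U)$ cancels with $\overline{f(e^{-\imath\th}U)}$, leaving $V$. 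The only bookkeeping subtlety here is keeping straight that $\F_{\th,f}$ sends $U\mapsto e^{\imath\th}U$, hence in a composition one substitutes $e^{-\imath\th}U$ for $U$ inside the functional calculus, and that the notation $f\circ R_{-\th/2\pi}$ means $z\mapsto f(e^{-\imath\th}z)$; one should also confirm that $\overline{f\circ R_{-\th/2\pi}}\in C(\bt;\bt)$ so that $\Psi$ is itself a legitimate Anzai skew-product, which is immediate. Having both composites equal to the identity on the generating set $\{U,V\}$, and the generators generating $\ba_\a$ as a $C^*$-algebra, gives $\Psi=\F_{\th,f}^{-1}$ and in particular $\F_{\th,f}$ is a $*$-automorphism.

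I do not expect a serious obstacle: the proof is essentially a functional-calculus manipulation plus an invocation of the universal property of $\ba_\a$. The one point deserving care is the appeal to the universal property itself — one must note that $\ba_\a$ is \emph{defined} as the universal $C^*$-algebra on two unitaries satisfying \eqref{ccrba}, so any pair of unitaries in any $C^*$-algebra satisfying those relations induces a unique unital $*$-homomorphism out of $\ba_\a$; applying this to the pair $(e^{\imath\th}U, f(U)V)$ inside $\ba_\a$ produces $\F_{\th,f}$. Everything else is routine verification on generators.
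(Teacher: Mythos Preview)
Your proposal is correct and follows essentially the same approach as the paper: verify the commutation relation \eqref{ccrba} for the images, invoke the universal property of $\ba_\a$, and exhibit the explicit inverse on generators. One small slip to fix in your display for $\Psi(\F_{\th,f}(V))$: the second factor should be $\overline{f\circ R_{-\th/2\pi}}(U)$ rather than $\overline{f\circ R_{-\th/2\pi}}(e^{-\imath\th}U)$, since $\Psi(V)=\overline{f\circ R_{-\th/2\pi}}(U)\,V$ by definition (only the $f(U)$ factor gets its argument shifted under $\Psi$); your verbal explanation and conclusion are nonetheless correct, because $\overline{f\circ R_{-\th/2\pi}}(U)=\overline{f}(e^{-\imath\th}U)$ is exactly what cancels $f(e^{-\imath\th}U)$.
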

\begin{proof}
We start by computing
\begin{align*}
\F_{\th,f}(U)\F_{\th,f}(V)=&e^{\imath\th}Uf(U)V=e^{\imath\th}f(U)UV\\
=&e^{\imath\th}f(U)VUe^{2\pi\imath\alpha}\\
=&\left(f(U)V\right)\left(e^{\imath\th}U\right)e^{2\pi\imath\alpha}\\
=&\F_{\th,f}(V)\F_{\th,f}(U)e^{2\pi\imath\alpha}\,,
\end{align*}
that is the unitaries $\F_{\th,f}(U)$ and $\F_{\th,f}(V)$ satisfy the same commutation relations as $U$ and $V$. Therefore, $\F_{\th,f}$ extends to a $*$-homomorphism of $\ba_\a$
by universality, see {\it e.g.} \cite{B}. 

Finally, it is a matter of computations to see that its inverse is given by
$\F_{\th,f}^{-1}(U)=e^{-\imath\theta}U$ and $\F_{\th,f}^{-1}(V)=f(e^{-\imath\theta}U)^*V=\overline{f\circ R_{-\th/2\pi}}(U)V$, and the proof follows.
\end{proof}
\begin{defin}
\label{nazai2}
The $*$-automorphism of $\ba_\a$ uniquely defined by \eqref{nazai} will be called the Anzai skew-product for the noncommutative 2-torus.
\end{defin}
\begin{rem}
We point out the following facts:
\begin{itemize}
\item[(i)] $f(U)^*$ will in general fail to coincide with $f(U^*)$. However, they do coincide if and only if
$f(\bar{z})=\overline{f(z)}$, for any $z\in\mathbb{T}$. 
\item[(ii)] For the Anzai skew-product in $\ba_\a$, it would have been possible to define $\F(U)=e^{i\theta}U$ and $\F(V)=Vf(U)$ or, equally well, any other reasonable interpolated definition between $f(U)V$ and $Vf(U)$ for $\F(V)$. By using \eqref{ccrba}, it is easily seen that
$$
Vf(U)=\F_{\th,f\circ R_{-\a}}(V)\,,
$$
and therefore we can interplay between both definitions by changing the continuous function $f$ in a trivial way.
\item[(iii)] Since $C^*(U,I)\sim C(\bt)$, any continuous function $f:\bt\rightarrow\bt$ defines a unitary 
$$
f(U)=:u\in C^*(U,I)\subset\ba_\a\,,
$$
and vice-versa. Therefore, the Anzai skew-product in Definition \ref{nazai2}
is equivalently associated with the angle $\th$ and the unitary $u:=f(U)$.
\end{itemize}
\end{rem}
For $f\in C(\bt;\bt)$, define
\begin{equation}
\label{coo}
f_{n}(z):=\left\{\begin{array}{ll}
                      \prod_{l=0}^{n-1}f\circ R_{-l\a}(z)\,,   \,\,\,
                      n>0\,,& \\[1ex]
                       1\,,\,\,\, n=0\,,&\\[1ex]
                     \prod_{l=1}^{|n|}\overline{f\circ R_{l\a}(z)}\,,  \,\,\,
                      n<0\,.&
                    \end{array}
                    \right.
\end{equation}
\begin{prop}
\label{nz1}
The canonical trace $\tau$ is invariant under the action of any Anzai skew-product $\F_{\th,f}$.
\end{prop}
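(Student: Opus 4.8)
The plan is to verify $\t\circ\F_{\th,f}=\t$ on the total set $\{U^mV^n\mid m,n\in\bz\}$, which suffices by linearity and continuity since $\t$ is bounded. First I would compute $\F_{\th,f}(U^mV^n)$ explicitly. Because $\F_{\th,f}$ is a $*$-homomorphism, $\F_{\th,f}(U^mV^n)=\big(e^{\imath\th}U\big)^m\big(f(U)V\big)^n$. The first factor is $e^{\imath m\th}U^m$. For the second, one unwinds $\big(f(U)V\big)^n$ using the commutation relation $VU=e^{-2\pi\imath\a}UV$, equivalently $Vg(U)=g(R_{-\a}U)V$ for any continuous $g$: this yields $\big(f(U)V\big)^n=f_n(U)V^n$ for $n>0$, where $f_n$ is exactly the product defined in \eqref{coo}; the case $n=0$ is trivial, and the case $n<0$ follows by applying the same identity to $\F_{\th,f}^{-1}$ (whose form is given in Proposition \ref{nazai1}) or by taking adjoints, again reproducing the $n<0$ branch of \eqref{coo}. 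Hence
\[
\F_{\th,f}(U^mV^n)=e^{\imath m\th}\,U^m f_n(U)\,V^n=e^{\imath m\th}\,(U^m f_n(U))\,V^n.
\]

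Next I would evaluate $\t$ on this element. Since $U^m f_n(U)\in C^*(U,I)\sim C(\bt)$, it can be expanded in the Fourier basis $\{U^k\mid k\in\bz\}$, and $\t(U^kV^n)=\d_{k,0}\d_{n,0}$. Therefore $\t\big(\F_{\th,f}(U^mV^n)\big)=0$ unless $n=0$, in which case $f_0\equiv 1$ and the expression reduces to $e^{\imath m\th}\,\t(U^m)=e^{\imath m\th}\d_{m,0}=\d_{m,0}$. In all cases this equals $\d_{m,0}\d_{n,0}=\t(U^mV^n)$, so $\t\circ\F_{\th,f}$ and $\t$ agree on the total set, and thus everywhere.

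There is no serious obstacle here; the one point requiring care is the bookkeeping in the identity $\big(f(U)V\big)^n=f_n(U)V^n$, i.e.\ making sure the accumulated rotations $R_{-l\a}$ are indexed correctly so that the product matches \eqref{coo} (and likewise for negative $n$, where one must check the definition of $\F_{\th,f}$ pushes the inverse rotations to the correct side). Alternatively, and perhaps more cleanly, one can argue via the conditional expectation $E_U$ and the trace on $C(\bt)$: writing any $x=\sum_n c_{n,x}(U)V^n$ as in \eqref{exfp}, one has $\t(x)=\oint c_{0,x}(z)\,\di{\rm m}(z)$; since $\F_{\th,f}(c(U)V^n)=c(e^{\imath\th}U)f_n(U)V^n$, the degree-zero component of $\F_{\th,f}(x)$ in the $V$-expansion is $c_{0,x}(e^{\imath\th}U)$, and invariance of $\di{\rm m}$ under the rotation $z\mapsto e^{\imath\th}z$ gives $\t(\F_{\th,f}(x))=\oint c_{0,x}(e^{\imath\th}z)\,\di{\rm m}(z)=\oint c_{0,x}(z)\,\di{\rm m}(z)=\t(x)$. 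Either route completes the proof.
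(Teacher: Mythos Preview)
Your proof is correct and follows essentially the same approach as the paper: verify the identity $\t\circ\F_{\th,f}=\t$ on the total set $\{U^mV^n\}$ by computing $\F_{\th,f}(U^mV^n)=e^{\imath m\th}U^mf_n(U)V^n$ and observing that the trace kills the $V^n$-factor unless $n=0$, in which case $f_0\equiv1$ reduces everything to $\d_{m,0}$. The paper phrases the same computation via the Fourier coefficients of $f_{m,n}(z):=e^{\imath m\th}z^mf_n(z)$, but the substance is identical.
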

\begin{proof}
We start by noticing that 
$$
\F_{\th,f}(U^mV^n)=\sum_{k\in\bz}\widehat{f_{m,n}}(k)U^kV^n\,,
$$
where the $f_{m,n}\in C(\bt;\bt)$ are given for $m,n\in\bz$ by
$$
f_{m,n}(z):=e^{\imath m\th}z^m f_n(z)\,, \quad z\in\bt\,. 
$$
Therefore, $\t\left(\F_{\th,f}(U^mV^n)\right)=\widehat{f_{m,0}}(0)$. Since $f_{m,0}(z)=e^{\imath m\th}z^m$, and thus $\widehat{f_{m,0}}(0)=\d_{m,0}$, the assertion follows by a standard approximation argument because 
$$
\t\left(\F_{\th,f}(U^mV^n)\right)=\d_{m,0}\d_{n,0}=\t(U^mV^n)\,,\quad m,n\in\bz\,.
$$
\end{proof}

\section{ergodic properties of the anzai skew-product}
\label{sec4}

For a unital $*$-automorphism $\g$ of a unital $C^*$-algebra $\ga$, $\cs(\ga)^\g$ and $\partial\left(\cs(\ga)^\g\right)$ denote the convex, $*$-weakly compact set of the invariant states of $\ga$ under the action of $\g$, and its convex boundary made of extreme invariant states, respectively. The elements of $\partial\left(\cs(\ga)^\g\right)$ are called the {\it ergodic states}.

Let $\f\in\cs(\ga)^\g$ and $E^{\f,\g}_1\in\cb(\ch_\f)$ be the self-adjoint projection onto the eigenspace of the vectors invariant under $V_{\f,\g}$. We recall that 
$$
\dim\big(E^{\f,\g}_1\big)=1 \Longrightarrow \f\in\partial\left(\cs(\ga)^\g\right)\,,
$$
see {\it e.g.} \cite{Sa}, Proposition 3.1.10.

We set $M:=\pi_\tau(\ba_\alpha)''$ the von Neumann algebra generated by the GNS representation of the canonical trace $\tau$. We note that:
\begin{itemize}
\item[(i)] $M$ acts in standard form (with conjugation $J_\t$) on $\ch_\t$,
\item[(ii)] $\ker(\pi_\t)=\{0\}$ and therefore $\pi_\t$ realises a $*$-automorphism between $\ba_\a$ and its image $\pi_\t(\ba_\a)\subset M$,
\item[(iii)] taking into account Proposition \ref{nz1}, the powers of the adjoint action $\ad_{V_{\t,\F_{\th,f}}}$ extend faithfully the action associated with the Anzai skew-product $\F_{\th,f}$ to all of 
$M$:
$$
\pi_\t\circ \F_{\th,f}^n\circ\pi_\t^{-1}=\ad\!{}_{V^n_{\t,\F_{\th,f}}}\lceil_{\pi_\t(\ba_\a)}\,,\quad n\in\bz\,.
$$
\end{itemize}
From now on, for the Anzai skew-product $\F_{\th,f}$, we assume that $\th/2\pi$ is irrational without further mention.

We are now in a position to state and prove our theorem that says exactly when a quantum Anzai flow $\F_{\th,f}$ is ergodic. 
\begin{thm}
\label{thm:ergodic}
Let $f\in C(\bt;\bt)$, and $f_n$ defined in \eqref{coo}. For any $\a\in\br$, the $C^*$-dynamical system $\big(\ba_\alpha, \F_{\th,f},\tau\big)$ is ergodic if and only if, for each $n\in\mathbb{Z}\smallsetminus\{0\}$, the equation
\begin{equation}
\label{eq:cohom}
\big(g\circ R_{\th/2\pi}\big)f_n=g
\end{equation}
has no non-zero solution $g\in L^2(\mathbb{T},{\rm m})$.
\end{thm}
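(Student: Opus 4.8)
The plan is to translate ergodicity of the $C^*$-dynamical system $\big(\ba_\alpha,\F_{\th,f},\t\big)$ into a statement about the fixed-point subspace of $V_{\t,\F_{\th,f}}$ acting on $\ch_\t$, and then compute that fixed-point subspace explicitly in the orthonormal basis $\{e_{mn}\}$. First I would recall that, since $\F_{\th,f}$ is a $*$-automorphism preserving the faithful trace $\t$, the system is ergodic precisely when the only $\F_{\th,f}$-invariant elements of $\ba_\alpha$ (equivalently, by the standard-form considerations (i)--(iii) above, of $M=\pi_\t(\ba_\alpha)''$) are the scalars; and by the criterion quoted just before the theorem (\cite{Sa}, Proposition 3.1.10, together with faithfulness of $\t$), this is equivalent to $\dim\ker(V_{\t,\F_{\th,f}}-\idd)=1$, i.e.\ the only vectors in $\ch_\t$ fixed by $V_{\t,\F_{\th,f}}$ are the multiples of $\xi_\t$.

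Next I would unwind the action of $V_{\t,\F_{\th,f}}$ on the basis. From the proof of Proposition \ref{nz1} we have the key formula $\F_{\th,f}(U^mV^n)=\sum_{k\in\bz}\widehat{f_{m,n}}(k)U^kV^n$ with $f_{m,n}(z)=e^{\imath m\th}z^m f_n(z)$, and hence $V_{\t,\F_{\th,f}}\,e_{mn}=\sum_k\widehat{f_{m,n}}(k)\,e_{kn}$. Crucially, $V_{\t,\F_{\th,f}}$ preserves each ``level'' $\ch_n:=\overline{\mathrm{span}}\{e_{mn}\mid m\in\bz\}$, and under the natural unitary identification $\ch_n\cong L^2(\bt,\mathrm{m})$ sending $e_{mn}\mapsto z^m$, the restriction $V_{\t,\F_{\th,f}}\lceil_{\ch_n}$ becomes the operator $\xi\mapsto f_n\cdot(\xi\circ R_{\th/2\pi})$ (up to checking the direction of the rotation, using that the Koopman operator of $R_{\th/2\pi}$ is multiplication by $z^m\mapsto e^{\imath m\th}z^m$ in Fourier modes). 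Therefore a fixed vector $\sum_n\xi_n$ (with $\xi_n\in\ch_n$) of $V_{\t,\F_{\th,f}}$ corresponds to a family $\{\xi_n\}_{n\in\bz}$, $\xi_n\in L^2(\bt,\mathrm{m})$, with $\sum_n\|\xi_n\|_2^2<\infty$, each satisfying $(\xi_n\circ R_{\th/2\pi})f_n=\xi_n$ — which is exactly equation \eqref{eq:cohom} for the index $n$. For $n=0$, since $f_0\equiv1$ and $\th/2\pi$ is irrational, the only $L^2$-solution is the constant, contributing the multiples of $\xi_\t$; for $n\neq0$, $\xi_n$ must be a solution of \eqref{eq:cohom}. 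Assembling: $\dim\ker(V_{\t,\F_{\th,f}}-\idd)=1$ if and only if \eqref{eq:cohom} has no non-zero $L^2$-solution for every $n\in\bz\smallsetminus\{0\}$, which is the claim.

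One point deserving care in writing this up is the ``only if'' direction packaged the other way: one must make sure that a non-zero solution $g\in L^2(\bt,\mathrm{m})$ of \eqref{eq:cohom} for some $n_0\neq0$ genuinely produces a non-scalar $\F_{\th,f}$-invariant element, i.e.\ that the corresponding fixed vector in $\ch_\t$ is affiliated to $M$ (via $J_\t$-commutant arguments, the fixed vectors of $V_{\t,\F_{\th,f}}$ in standard form correspond to fixed elements of $M$) — this is where the standard-form apparatus in (i)--(iii) is essential, since $V_{\t,\F_{\th,f}}$ fixes more vectors than $M$ would supply were it not acting in standard form; but since $\F_{\th,f}$ is a trace-preserving automorphism, $V_{\t,\F_{\th,f}}$ commutes with $J_\t$ and the fixed-point space of $V_{\t,\F_{\th,f}}$ is the closure of $M^{\F_{\th,f}}\xi_\t$, so non-triviality of the former is equivalent to non-triviality of the latter.

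I expect the main obstacle to be purely bookkeeping: pinning down, without sign or orientation errors, the precise identification of $V_{\t,\F_{\th,f}}\lceil_{\ch_n}$ with the weighted Koopman operator $\xi\mapsto f_n\cdot(\xi\circ R_{\th/2\pi})$ — in particular getting the rotation $R_{\th/2\pi}$ (not $R_{-\th/2\pi}$) and the correct twisting function $f_n$ rather than $\overline{f_n}$ or $f_{-n}$, which forces one to be scrupulous about the conventions in \eqref{coo}, the $e^{\imath m\th}$ factor in $f_{m,n}$, and the definition $\widehat{g}(n)=\oint g(z)z^{-n}\,\mathrm{d}z/(2\pi\imath z)$. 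Once that identification is nailed down, the rest is the short argument above: decompose $\ch_\t=\bigoplus_n\ch_n$, observe $V_{\t,\F_{\th,f}}$ respects the decomposition, and read off the kernel level by level.
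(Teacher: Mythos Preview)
Your proposal is correct and follows essentially the same route as the paper: both decompose $\ch_\t$ into the ``levels'' indexed by the $V$-power $n$ (the paper writes a generic vector as $\sum_n g_n(\pi_\t(U))\pi_\t(V)^n\xi_\t$ with $g_n\in L^2(\bt,\mathrm m)$, which is exactly your $\bigoplus_n\ch_n$ under $\ch_n\cong L^2(\bt,\mathrm m)$), observe that $V_{\t,\F_{\th,f}}$ preserves each level and acts there as $g\mapsto (g\circ R_{\th/2\pi})f_n$, and then read off the fixed vectors. Your discussion of the two-sided equivalence ``$\t$ ergodic $\Leftrightarrow\dim\ker(V_{\t,\F_{\th,f}}-\idd)=1$'' via the standard-form/$J_\t$-commutation argument is actually more explicit than the paper's own proof, which only invokes the one-directional implication from \cite{Sa}, Proposition 3.1.10.
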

\begin{proof}
Fix $g\in L^2(\mathbb{T},{\rm m})$, and define $g\big(\pi_\t(U)\big)$ through the measurable functional calculus. It is a closed, in general unbounded operator acting on $\ch_\t$ which does not depend on the measurable representative function $g$. By a standard approximation argument, it is matter of routine to verify that, first $\pi_\t(V)^n\xi_\t\in\cd_{g(\pi_\t(U))}$ for each $n\in\bz$, and second
$$
\ch_\t=\bigg\{\sum_{n\in\bz}g_n\big(\pi_\t(U)\big)\pi_\t(V)^n\xi_\t\mid g_n\in L^2(\mathbb{T},{\rm m})\,\text{s.t.}\,\sum_{n\in\bz}\|g_n\|^2<+\infty\bigg\}\,.
$$
In addition, 
$$
\ch_\t\ni\xi=\sum_{n\in\bz}g_n\big(\pi_\t(U)\big)\pi_\t(V)^n\xi_\t=0\iff g_n=0\,,\,\,\,n\in\bz\,.
$$
Indeed, by the following simple calculations
\begin{align*}
\|\xi\|^2=&\sum_{m,n\in\bz}\big\langle g_m\big(\pi_\t(U)\big)^*g_n\big(\pi_\t(U)\big)\pi_\t(V^{n-m})\xi_\t,\xi_\t\big\rangle\\
=&\sum_{m,n\in\bz}\big\langle g_m\big(\pi_\t(U)\big)^*g_n\big(\pi_\t(U)\big)\xi_\t,\xi_\t\big\rangle\d_{m,n}\\
=&\sum_{n\in\bz}\big\langle g_n\big(\pi_\t(U)\big)^*g_n\big(\pi_\t(U)\big)\xi_\t,\xi_\t\big\rangle\\
=&\sum_{n\in\bz}\int|g_n(z)|^2\di m(z)=\sum_{n\in\bz}\|g_n\|^2\,,
\end{align*}
we obtain the assertions by taking into account that elements of the form $g\big(\pi_\t(U)\big)\pi_\t(V)^n\xi_\t=\pi_\t\big(g(U)V^n\big)\xi_\t$ for $g\in C(\bt)$ and $n\in\bz$ generate a total set of 
$\ch_\t$. Obviously, $\xi_\t$ corresponds to the singleton $g_n(z)=\d_{n,0}$.

As explained below, $\big(\ba_\alpha, \F_{\th,f},\tau\big)$ is ergodic (or in other words $\t$ is extreme among the invariant states under the action of $ \F_{\th,f}$) if $E^{\f,\F}_1\ch_\t=\bc\xi_\t$.

Taking into account the above considerations, we expand a generic vector in $\ch_\t$ as $\xi=\sum_{n\in\bz}g_n\big(\pi_\t(U)\big)\pi_\t(V)^n\xi_\t$. If $n=0$, the equation 
$V_{\t,\F_{\th,f}}\xi=\xi$ leads to $g_0\circ R_{\th/2\pi}=g_0$ in $L^2(\mathbb{T},{\rm m})$, which has the unique solution $g_0=1$ because $\th$ is irrational. Therefore, we can reduce the matter to 
sums of the form $\xi=\sum_{n\neq0}g_n\big(\pi_\t(U)\big)\pi_\t(V)^n\xi_\t$ which generate $\ch_\t\ominus\{\bc\xi_\t\}$.
We obtain
\begin{align*}
0=&V_{\t,\F_{\th,f}}\xi-\xi\\
=&\sum_{n\neq0}\bigg(g_n\big(\pi_\t\big(\F_{\th,f}(U)\big)\big)\pi_\t\big(\F_{\th,f}(V^n)\big)\xi_\t-g_n\big(\pi_\t(U)\big)\pi_\t(V^n)\xi_\t\bigg)\\
=&\sum_{n\neq0}\bigg(g_n\big(\pi_\t(e^{\imath\th}U)\big)\pi_\t(f_n(U))-g_n\big(\pi_\t(U)\big)\bigg)\pi_\t(V^n)\xi_\t\\
=&\sum_{n\neq0}\bigg(g_n\big(\pi_\t(e^{\imath\th}U)\big)f_n\big(\pi_\t(U)\big)-g_n\big(\pi_\t(U)\big)\bigg)\pi_\t(V^n)\xi_\t\\
=&\sum_{n\neq0}\bigg(\big((g_n\circ R_{\th/2\pi})f_n\big)\big(\pi_\t(U)\big)-g_n\big(\pi_\t(U)\big)\bigg)\pi_\t(V^n)\xi_\t\,,
\end{align*}
which leads to \eqref{eq:cohom} has no nonzero $L^2$-solutions for each $n\neq0$
\end{proof}
\begin{rem}
\label{md1}
We note that, if $g$ is any measurable function finite almost everywhere, and satisfying $g\big(e^{\imath\th}z\big)f_n(z)=g(z)$ a.e. for some $n\in\bz$ and $\th\in\br$ with $\th/2\pi$ irrational, then $|g(z)|={\rm const.}$ a.e.\,.
\end{rem}
By taking the absolute value, we have $\big|g(e^{\imath\th}z)f_n(z)\big|=|g(z)|$, which leads to $\big|g(e^{\imath\th}z)\big|=|g(z)|$ because $|f_n(z)|=1$. Therefore, 
$\big|g(e^{\imath n\th}z)\big|=|g(z)|$ for each $n\in\bz$, which leads to the assertion thanks to ergodicity.

\noindent\hspace{12.33 cm}$\square$

We note that, by Remark \ref{md1}, any solution of the cohomological equation \eqref{eq:cohom} belongs to $L^\infty(\bt,{\rm m})$.

With an argument similar to the one leading to Theorem \ref{thm:ergodic}, we have the following result which characterises the topological ergodicity for the Anzai skew-product in terms of the absence of \emph{continuous} solutions of the cohomological equations \eqref{eq:cohom}.
\begin{prop}
\label{thm:topergodic}
For the $C^*$-dynamical system $(\mathbb{A}_\alpha, \Phi_{\theta, f})$, we have $(\ba_\a)^{\Phi_{\theta, f}}=\bc\idd_{\ba_\a}$ if and only if, for each $n\in\mathbb{Z}\smallsetminus\{0\}$, the equation \eqref{eq:cohom} has no non-zero solution $g\in C(\mathbb{T})$.
\end{prop}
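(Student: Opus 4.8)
The plan is to adapt the argument of Theorem \ref{thm:ergodic} almost verbatim, replacing the von Neumann algebra setup with the $C^*$-algebraic one. First I would recall from the end of Section \ref{sec2} that every $x\in\ba_\a$ admits a Fej\'er-convergent expansion $x=\sum_{n\in\bz}c_{n,x}(U)V^n$ with $c_{n,x}\in C(\bt)$, and that $x=0$ iff all $c_{n,x}\equiv 0$. This is the $C^*$-side analogue of the $L^2$-decomposition $\xi=\sum_n g_n(\pi_\t(U))\pi_\t(V)^n\xi_\t$ used before, and it is the correct ambient object here since we are now looking for fixed points inside $\ba_\a$ itself, not inside $\ch_\t$.

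Next I would apply $\F_{\th,f}$ to such an expansion. Using \eqref{nazai} together with the identity $\F_{\th,f}(U^mV^n)=\sum_k\widehat{f_{m,n}}(k)U^kV^n$ established in the proof of Proposition \ref{nz1} — or more directly $\F_{\th,f}(g(U)V^n)=\big((g\circ R_{\th/2\pi})f_n\big)(U)V^n$ for $g\in C(\bt)$, which follows from $\F_{\th,f}(g(U))=g(e^{\imath\th}U)$ and $\F_{\th,f}(V^n)=f_n(U)V^n$ — one gets
$$
\F_{\th,f}(x)=\sum_{n\in\bz}\big((c_{n,x}\circ R_{\th/2\pi})\,f_n\big)(U)\,V^n\,.
$$
By uniqueness of the coefficients, $\F_{\th,f}(x)=x$ is equivalent to $(c_{n,x}\circ R_{\th/2\pi})f_n=c_{n,x}$ in $C(\bt)$ for every $n\in\bz$. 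For $n=0$ this reads $c_{0,x}\circ R_{\th/2\pi}=c_{0,x}$, which by irrationality of $\th/2\pi$ (density of the orbit and continuity) forces $c_{0,x}$ constant. Hence $(\ba_\a)^{\F_{\th,f}}=\bc\idd_{\ba_\a}$ precisely when, for each $n\neq 0$, the equation $(g\circ R_{\th/2\pi})f_n=g$ has no non-zero solution $g\in C(\bt)$, which is \eqref{eq:cohom} in the continuous category.

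The only genuine subtlety — and the step I would be most careful about — is the bookkeeping of the Fej\'er convergence: one must check that applying the $*$-automorphism $\F_{\th,f}$ commutes with the Fej\'er-type summation of the series \eqref{exfp}, so that the displayed formula for $\F_{\th,f}(x)$ is legitimate and the identification of coefficients $c_{n,\F_{\th,f}(x)}=(c_{n,x}\circ R_{\th/2\pi})f_n$ is justified. This follows because the conditional expectation $E_U$ intertwines appropriately: a short computation using $E_U(\F_{\th,f}(x)V^{-n})$ and the covariance $E_U\circ\F_{\th,f}=R_{\th/2\pi}^{\,*}\!\circ\F$-type relation on $C^*(U,I)$ gives the coefficient formula directly, bypassing any delicate interchange of limits. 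With that in hand the equivalence is immediate, and the argument is otherwise a transcription of the proof of Theorem \ref{thm:ergodic} with $L^2(\bt,{\rm m})$ replaced throughout by $C(\bt)$.
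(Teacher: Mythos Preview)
Your proposal is correct and follows essentially the same route as the paper: expand $x$ via \eqref{exfp}, apply $\F_{\th,f}$ term-by-term to obtain $\F_{\th,f}(x)-x=\sum_n\big((c_{n,x}\circ R_{\th/2\pi})f_n-c_{n,x}\big)(U)V^n$, and use uniqueness of the coefficients to reduce the fixed-point condition to the cohomological equations in $C(\bt)$. Your explicit treatment of the $n=0$ case and your remark that the coefficient identity $c_{n,\F_{\th,f}(x)}=(c_{n,x}\circ R_{\th/2\pi})f_n$ can be read off directly from $E_U(\F_{\th,f}(x)V^{-n})$ (avoiding any interchange-of-limits issue) are sound and slightly more detailed than the paper's presentation, but the argument is the same.
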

\begin{proof}
For $x\in\ba_\a\smallsetminus  C^*(U,I)$, consider the expansion in \eqref{exfp} given by
$x=\sum_{n\neq0}c_{n, x}(U)V^n$. Then
$$
\Phi_{\theta, f}(x)-x=\sum_{n\neq0}\big(c_{n, x}(e^{\imath\th}U)f_n(U)-c_{n, x}(U)\big)V^n\,,
$$
where the sum converges in norm in the sense previously explained, and the $f_n$ are given in \eqref{coo}. Therefore, $(\ba_\a)^{\Phi_{\theta, f}}\supsetneq\bc\idd_{\ba_\a}$ if and only if $\Phi_{\theta, f}(x)-x=0$ for some nonzero element $x\in\ba_\a$ as above. But this happens if and only if, for some $n\neq0$, $c_{n, x}(e^{\imath\th}U)f_n(U)=c_{n, x}(U)$ holds true for a non-zero continuous function $c_{n, x}$, or equivalently if and only if the equation \eqref{eq:cohom} has some non-zero solution $g\equiv c_{n, x}\in C(\mathbb{T})$, $n\neq0$.
\end{proof}
The next proposition provides many examples of ergodic non-com\-mutative Anzai skew-products, generalising the well-known ones arising from classical ergodic theory, see {\it e.g.}
\cite{A}, pag. 84.
\begin{prop} \label{prop:examples}
If $f^{(z_o)}(z)=z_oz$, where $z_o\in\bt$, then the corresponding  dynamical system
$(\mathbb{A}_\alpha, \Phi_{\theta, f^{(z_o)}},\tau)$ is ergodic for any $\alpha\in \mathbb{R}$.
\end{prop}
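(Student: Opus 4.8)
The plan is to apply Theorem \ref{thm:ergodic}: it suffices to show that for every $n\in\bz\smallsetminus\{0\}$ the cohomological equation $\big(g\circ R_{\th/2\pi}\big)f^{(z_o)}_n=g$ has no nonzero solution $g\in L^2(\bt,{\rm m})$. First I would compute the twisting cocycle $f^{(z_o)}_n$ explicitly from \eqref{coo}. Writing $z_o=e^{\imath\b}$, for $n>0$ one has $f^{(z_o)}(z)=e^{\imath\b}z$, so
$$
f^{(z_o)}_n(z)=\prod_{l=0}^{n-1}f^{(z_o)}\!\circ R_{-l\a}(z)=\prod_{l=0}^{n-1}e^{\imath\b}e^{-2\pi\imath l\a}z=e^{\imath n\b}\,e^{-\pi\imath n(n-1)\a}\,z^n\,,
$$
and a similar computation (using the complex conjugates) gives, for all $n\in\bz$, an expression of the form $f^{(z_o)}_n(z)=\g_n z^n$ with $\g_n\in\bt$ a constant depending on $n,\a,\b$ (and one checks $\g_{-n}=\overline{\g_n}$).

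Next I would pass to Fourier coefficients. Suppose $g\in L^2(\bt,{\rm m})$ solves $\big(g\circ R_{\th/2\pi}\big)f^{(z_o)}_n=g$ for a fixed $n\neq0$. Since $(g\circ R_{\th/2\pi})(z)=g(e^{\imath\th}z)$ has $k$-th Fourier coefficient $e^{\imath k\th}\widehat g(k)$, and multiplication by $\g_n z^n$ shifts the index by $n$ and rescales by $\g_n$, comparing the $k$-th Fourier coefficients of both sides yields
$$
\g_n\,e^{\imath(k-n)\th}\,\widehat g(k-n)=\widehat g(k)\,,\qquad k\in\bz\,.
$$
This recursion relates $\widehat g(k)$ only to $\widehat g(k-n)$, so it decouples into residue classes mod $n$; iterating $m$ times gives $|\widehat g(k)|=|\widehat g(k-mn)|$ for all $m\in\bz$. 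Since $g\in L^2$ forces $\widehat g(k)\to0$ as $|k|\to\infty$ along each such arithmetic progression, we get $\widehat g(k)=0$ for every $k$, hence $g=0$. Thus no nonzero $L^2$-solution exists for any $n\neq0$, and Theorem \ref{thm:ergodic} gives ergodicity of $(\ba_\a,\F_{\th,f^{(z_o)}},\t)$ for every $\a\in\br$.

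The computation is essentially routine; the only point requiring a little care is the bookkeeping of the constant $\g_n$ through the product \eqref{coo} in the three cases $n>0$, $n=0$, $n<0$, and checking that it is indeed a \emph{constant} (independent of $z$) — this is what makes the Fourier recursion have the clean shift form above, and it is precisely the feature of the affine choice $f^{(z_o)}(z)=z_oz$ that is being exploited. Note that the argument does not use irrationality of $\a$ at all, nor any arithmetic relation between $\a$, $\th$ and $\b$; only the irrationality of $\th/2\pi$ enters, and only through the decay of Fourier coefficients of $L^2$ functions, exactly as in the classical case (\emph{cf.} \cite{A}).
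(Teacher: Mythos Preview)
Your proof is correct and follows essentially the same route as the paper's: reduce via Theorem \ref{thm:ergodic}, compute $f^{(z_o)}_n(z)=\g_n z^n$ for a unimodular constant $\g_n$ (the paper writes it as $a_n(z_oz)^n$), pass to Fourier coefficients to obtain a recursion forcing $|\widehat g(k)|=|\widehat g(k+n)|$, and conclude from square-summability. One small inaccuracy in your closing remarks: the irrationality of $\th/2\pi$ plays no role in the Fourier-decay step for $n\neq0$; it is used only in the proof of Theorem \ref{thm:ergodic} to handle the $n=0$ component.
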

\begin{proof}
Thanks to Theorem \ref{thm:ergodic}, all we need to show is the cohomological equations for $f^{(z_o)}$ have no nonzero $L^\infty$-solutions for $n\neq0$. Setting
$$
a_{n}:=\left\{\begin{array}{ll}
                      e^{-\pi\imath n(n-1)\a}\,,   \,\,\,
                      n>0\,,& \\
                     e^{\pi\imath n(|n|+1)\a}\,,  \,\,\,
                      n<0\,,&
                    \end{array}
                    \right.
$$
for $f^{(z_o)}_n$ in \eqref{coo} we get $f^{(z_o)}_n(z)=a_n(z_oz)^n$. 

Fix now $g\in L^\infty(\mathbb{T}, {\rm m})\subset L^2(\mathbb{T}, {\rm m})$ given by
$g(z)=\sum_{k\in\bz}\widehat{g}(k)z^k$, where $\{\widehat{g}(k)\}_{k\in\bz}$ is a square-summable double sequence. For each fixed $n\neq0$, \eqref{eq:cohom} leads to
$$
\bigg(a_nz_o^n\widehat{g}(k)e^{\imath k\th}-\widehat{g}(k+n)\bigg)=0\,.
$$
Consequently, we easily obtain $\left|\widehat{g}(k)\right|=\left|\widehat{g}(k+n)\right|$ and therefore $\widehat{g}(k)=0$ for all $k\in\bz$, otherwise
$\widehat{g}(k)\nrightarrow0$ which would be a contradiction because of the square-summability of the $\widehat{g}(k)$. This means that $g$ is the null-function in $L^2(\bt,{\rm m})$. Therefore, the equation \eqref{eq:cohom} cannot have any non-null solution for each $n\in\bz\smallsetminus\{0\}$.
\end{proof}
We are now ready to prove one of the main results concerning unique ergodicity. It can be viewed as a noncommutative version of Lemma 2.1 in  \cite{Fu} involving skew-products for processes on the torus ({\it i.e.} a natural generalisation of Anzai skew-products).
\begin{thm}
\label{thm:uniquelyergodic}
For each $\a\in\br$, let $f\in C(\bt;\bt)$ such that the $C^*$-dynamical system $\big(\ba_\alpha, \F_{\th,f},\tau\big)$ is ergodic. Then  $\big(\ba_\alpha, \F_{\th,f}\big)$ is uniquely ergodic with the canonical trace $\t$ as the unique invariant state.
\end{thm}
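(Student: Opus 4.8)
The plan is to show directly that \emph{every} $\F_{\th,f}$-invariant state on $\ba_\a$ coincides with the canonical trace $\t$; since $\t$ is itself invariant by Proposition \ref{nz1}, this yields unique ergodicity. So let $\psi\in\cs(\ba_\a)$ satisfy $\psi\circ\F_{\th,f}=\psi$. For each $n\in\bz$, the map $C(\bt)\ni g\mapsto\psi\big(g(U)V^n\big)$ is linear of norm $\le1$, since $C^*(U,I)\cong C(\bt)$ isometrically and $\|g(U)V^n\|=\|g\|_\infty$; hence by the Riesz representation theorem there is a unique complex Radon measure $\m_n$ on $\bt$ with $\|\m_n\|\le1$ and $\psi\big(g(U)V^n\big)=\int_\bt g\,\di\m_n$. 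Because $\{U^mV^n\mid m,n\in\bz\}$ is total in $\ba_\a$ and both $\psi$ and $\t$ are bounded, it suffices to prove that $\m_0$ is the normalised Lebesgue measure ${\rm m}$ and that $\m_n=0$ for every $n\ne0$.

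Next I would translate the invariance of $\psi$ into a relation for the $\m_n$. From \eqref{nazai} and the commutation relations \eqref{ccrba} one gets $\F_{\th,f}\big(g(U)V^n\big)=\big((g\circ R_{\th/2\pi})f_n\big)(U)\,V^n$, with $f_n$ as in \eqref{coo}; applying $\psi$ and using $\psi\circ\F_{\th,f}=\psi$ gives
$$
\int_\bt (g\circ R_{\th/2\pi})\,f_n\,\di\m_n=\int_\bt g\,\di\m_n\,,\qquad g\in C(\bt)\,,
$$
which says exactly that $(R_{\th/2\pi})_*(f_n\m_n)=\m_n$. For $n=0$ we have $f_0\equiv1$, so $\m_0$ is an $R_{\th/2\pi}$-invariant probability measure; as $\th/2\pi$ is irrational, the classical unique ergodicity of the irrational rotation on $\bt$ forces $\m_0={\rm m}$, and in particular $\psi(U^m)=\d_{m,0}$.

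For $n\ne0$ the point is that the twisted invariance collapses onto the cohomological equation \eqref{eq:cohom}. Passing to total variations and using $|f_n|\equiv1$ gives $(R_{\th/2\pi})_*|\m_n|=|\m_n|$, so again by unique ergodicity of the irrational rotation $|\m_n|=c_n\,{\rm m}$ for some $c_n\ge0$; hence $\m_n\ll{\rm m}$, say $\di\m_n=h_n\,\di{\rm m}$ with $|h_n|=c_n$ almost everywhere, so that $h_n\in L^\infty(\bt,{\rm m})\subset L^2(\bt,{\rm m})$. Substituting back and using the elementary identity $(R_{\th/2\pi})_*(\ff\,{\rm m})=(\ff\circ R_{-\th/2\pi})\,{\rm m}$ for $\ff\in L^1$, one finds $f_n\,h_n=h_n\circ R_{\th/2\pi}$ in $L^2(\bt,{\rm m})$; taking complex conjugates and using $|f_n|\equiv1$, the function $g:=\overline{h_n}$ satisfies $(g\circ R_{\th/2\pi})f_n=g$, that is \eqref{eq:cohom}. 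Since the system is assumed ergodic, Theorem \ref{thm:ergodic} tells us \eqref{eq:cohom} has no nonzero $L^2$-solution for $n\ne0$; hence $h_n=0$ and $\m_n=0$.

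Combining the two cases, $\psi(U^mV^n)=\d_{m,0}\d_{n,0}=\t(U^mV^n)$ for all $m,n\in\bz$, whence $\psi=\t$ by continuity and totality, which completes the proof. The one genuinely delicate point is the measure-theoretic step of the previous paragraph: recognising that the twisted invariance $(R_{\th/2\pi})_*(f_n\m_n)=\m_n$ forces $\m_n$ to be absolutely continuous with density of constant modulus — this is precisely where the unimodularity of $f_n$ and the unique ergodicity of the irrational rotation on the circle are used — after which one lands exactly in the hypothesis of Theorem \ref{thm:ergodic}. The remaining manipulations with the generators $U,V$ are routine. (Note that, unlike the classical argument in \cite{Fu}, this proof does not invoke generic points, so it applies verbatim to non-separable settings as well.)
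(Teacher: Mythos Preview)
Your proof is correct, but it follows a genuinely different route from the paper's.

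The paper's argument never unwinds an invariant state into level-by-level measures on $\bt$ and never invokes Theorem \ref{thm:ergodic} a second time. Instead it uses only that ergodicity makes $\t$ \emph{extremal} among invariant states, together with the gauge action $\r^{(o)}_{1,z}$, which commutes with $\F_{\th,f}$. For an arbitrary invariant $\om$, the average $\bar\om:=\oint\om\circ\r^{(o)}_{1,z}\,\frac{\di z}{2\pi\imath z}$ is computed on monomials and shown to equal $\t$; then a bisection-and-limit argument (splitting the averaging interval in half, using extremality at each step, and passing to the weak-$*$ limit) forces $\om=\t$. Your approach, by contrast, represents $g\mapsto\psi(g(U)V^n)$ by a complex measure $\m_n$, shows its total variation is rotation-invariant (hence $|\m_n|=c_n\,{\rm m}$), and then feeds the density $\overline{h_n}$ straight back into the cohomological equation \eqref{eq:cohom}, killing it via Theorem \ref{thm:ergodic}.

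What each buys: your argument is more direct and makes the equivalence with the cohomological criterion completely transparent---it is essentially the ``only if'' direction of Theorem \ref{thm:ergodic} run for states rather than vectors. The paper's argument is more structural: it uses only extremality and the commuting circle action, not the explicit Fourier/Riesz decomposition, which is why it transplants verbatim to the processes-on-the-torus setting of Section \ref{sec5} with an arbitrary (possibly non-separable) base $X_o$. Your measure-theoretic step---$|f_n|\equiv1$ giving $|f_n\m_n|=|\m_n|$ and pushforward by a homeomorphism commuting with total variation---is routine, so there is no gap.
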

\begin{proof}
We already know that, under the hypotheses made in the theorem, the trace $\tau$ is an extreme state in $\mathcal{S}(\mathbb{A}_\alpha)^{\Phi_{\theta, f}}$.
What we now need to do is show that the trace $\t$ is in addition the only $\Phi_{\theta, f}$-invariant state. 

For the Anzai skew-product $\F_{\th,f}$ and $\r^{(o)}_{1,z}$ given in \eqref{rozero}, it is immediate to see that $\r^{(o)}_{1,z}\circ\F_{\th,f}=\F_{\th,f}\circ\r^{(o)}_{1,z}$. Consequently, 
$\om\in\mathcal{S}(\mathbb{A}_\alpha)^{\Phi_{\theta, f}}\Rightarrow\om\circ\r^{(o)}_{1,z}\in\mathcal{S}(\mathbb{A}_\alpha)^{\Phi_{\theta, f}}$.

Let now $\omega$ be any invariant state. Since the irrational rotations on the $1$-dimensional torus are well known to be uniquely ergodic, we immediately see that 
$\omega(U^m)=\delta_{m,0}$. 

We next consider a new state $\bar{\omega}$, which is obtained by averaging the given $\omega$ under the action of the $*$-automorphisms $\r^{(o)}_{1,z}$ defined above:
$\bar{\omega}:=\oint\frac{\di z}{2\pi\imath z}\omega\circ\r^{(o)}_{1,z}$. We claim that $\bar{\omega}$ is always the trace $\tau$ irrespective of what
$\omega$ actually was.
Indeed, for the collection of linear generators $\big\{U^mV^n\big\}_{m.n\in\bz}$, one has
\begin{align*}
\bar{\omega}(U^mV^n)=&\oint\omega\big(\r^{(o)}_{1,z}(U^mV^n)\big)\frac{\di z}{2\pi\imath z}=\omega(U^mV^n)\oint\frac{z^n\di z}{2\pi\imath z}\\
=&\omega(U^mV^n)\delta_{n,0}=\omega(U^m)\delta_{n,0}=\delta_{m,0}\delta_{n,0}=\t(U^mV^n)\,.
\end{align*}
The last step is to note that $\om$ should be the trace itself because we have just shown that $\t$ is obtained by averaging a class of invariant states. For such a purpose, suppose $\om\neq\t$ and define
$$
\f_o:=\frac1{\pi}\int_0^{\pi}\omega\circ\r^{(o)}_{1,e^{\imath\b}}\di\b\,,\quad \psi_o:=\frac1{\pi}\int_{\pi}^{2\pi}\omega\circ\r^{(o)}_{1,e^{\imath\b}}\di\b\,.
$$
If neither were the trace $\t$, we immediately would be lead to a contradiction because $\t=\frac12(\f_o+\psi_o)$, being extreme, cannot be obtained by convex combination of invariant states. We then conclude that both $\f_o$ and $\psi_o$ should be the trace $\t$. This means 
$\t=\frac1{\pi}\int_0^{\pi}\omega\circ\r^{(o)}_{1,e^{\imath\b}}\di\b$. We can repeat the process, obtaining a sequence of invariant states $\{\f_j\}_{j\in\bn}$ with
$\f_j:=\frac{2^j}{\pi}\int_0^{\pi/2^j}\omega\circ\r^{(o)}_{1,e^{\imath\b}}\di\b$. If some of them were not the trace, we would obtain again a contradiction by extremality as before, and we are done. 
Conversely, if all of them coincided with the trace, taking the limit in the $*$-weak topology, we would easily obtain
$$
\t=\frac{2^j}{\pi}\int_0^{\pi/2^j}\omega\circ\r^{(o)}_{1,e^{\imath\b}}\di\b=\lim_j\frac{2^j}{\pi}\int_0^{\pi/2^j}\omega\circ\r^{(o)}_{1,e^{\imath\b}}\di\b=\om\,,
$$
that is any invariant state $\om$ coincides with the trace $\t$.
\end{proof}
Collecting together Theorems \ref{thm:ergodic} and \ref{thm:uniquelyergodic}, we have the following
\begin{cor}
For an Anzai skew-product $\F_{\th,f}$ on $\ba_\a$, the following are equivalent:
\begin{itemize}
\item[(i)] the canonical trace $\t$ is ergodic for $\F_{\th,f}$,
\item[(ii)] the equation $\big(g\circ R_{\th/2\pi}\big)f_n=g$ has no nonzero solution in $L^\infty(\bt,{\rm m})$ for each $n\in\bz\smallsetminus\{0\}$,
\item[(iii)] the dynamical system $\big(\ba_\a, \F_{\th,f}\big)$ is uniquely ergodic.
\end{itemize}
\end{cor}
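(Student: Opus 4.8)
The plan is to observe that this corollary is essentially a repackaging of Theorems \ref{thm:ergodic} and \ref{thm:uniquelyergodic}, supplemented by one elementary remark, so that the proof amounts to assembling the implications into a cycle. First I would record that the equivalence (i) $\iff$ (ii) is exactly the statement of Theorem \ref{thm:ergodic}: there ergodicity of $\t$ (that is, extremality of $\t$ among the $\F_{\th,f}$-invariant states, equivalently $E^{\t,\F_{\th,f}}_1\ch_\t=\bc\xi_\t$) was characterised as the absence of nonzero solutions $g\in L^2(\bt,{\rm m})$ of the cohomological equations $\big(g\circ R_{\th/2\pi}\big)f_n=g$ for every $n\in\bz\smallsetminus\{0\}$.

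Next, (i) $\Rightarrow$ (iii) is precisely Theorem \ref{thm:uniquelyergodic}: assuming $\big(\ba_\a,\F_{\th,f},\t\big)$ ergodic, the averaging argument over the automorphisms $\r^{(o)}_{1,z}$ commuting with $\F_{\th,f}$, together with the unique ergodicity of the irrational rotation on $\bt$ in the $U$-variable and the extremality of $\t$, forces every $\F_{\th,f}$-invariant state to coincide with $\t$; hence the system is uniquely ergodic with unique invariant state $\t$.

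Finally, for (iii) $\Rightarrow$ (i) I would argue directly: if $\big(\ba_\a,\F_{\th,f}\big)$ is uniquely ergodic, then the convex, $*$-weakly compact set $\cs(\ba_\a)^{\F_{\th,f}}$ of invariant states is a singleton, necessarily equal to $\{\t\}$ since $\t$ is $\F_{\th,f}$-invariant by Proposition \ref{nz1}. A one-point convex set coincides with its own extreme boundary, so $\t\in\partial\big(\cs(\ba_\a)^{\F_{\th,f}}\big)$, i.e.\ $\t$ is ergodic. Combining these, one closes the cycle (i) $\iff$ (ii), (i) $\Rightarrow$ (iii) $\Rightarrow$ (i), which yields the equivalence of all three statements.

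I do not anticipate any genuine obstacle here: the entire substance of the result already resides in the two theorems proved above, and the only extra input — that the unique element of a one-point convex set is automatically extreme — is immediate. If anything, the one point worth stating carefully is the use of Proposition \ref{nz1} to guarantee that the (a priori possibly empty) set of invariant states is nonempty and contains $\t$, so that ``unique invariant state'' really does mean ``$\t$''.
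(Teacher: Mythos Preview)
Your proposal is correct and matches the paper's approach exactly: the paper simply states that the corollary follows by ``collecting together Theorems \ref{thm:ergodic} and \ref{thm:uniquelyergodic}'' and gives no further argument. Your added remark that (iii) $\Rightarrow$ (i) is trivial (a singleton invariant-state set makes $\t$ automatically extreme) is the obvious missing link the paper leaves implicit.
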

\noindent\hspace{12.33 cm}$\square$

Concerning the minimality, we have the following
\begin{cor}
Suppose that $\big(\ba_\alpha, \F_{\th,f},\tau\big)$ is ergodic. Then $\big(\ba_\alpha, \F_{\th,f}\big)$ is minimal.
\end{cor}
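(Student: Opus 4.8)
The plan is to deduce minimality of $(\ba_\a,\F_{\th,f})$ from the assumed ergodicity by a two-step argument: first establish a weaker, purely algebraic statement about continuous solutions of the cohomological equation, then lift the absence of invariant hereditary $C^*$-subalgebras from the fixed-point algebra. The key observation is that ergodicity of the trace is the $L^2$-statement in Theorem \ref{thm:ergodic}, and since $C(\bt)\subset L^2(\bt,{\rm m})$, it \emph{a fortiori} rules out nonzero \emph{continuous} solutions of $\big(g\circ R_{\th/2\pi}\big)f_n=g$ for every $n\in\bz\smallsetminus\{0\}$. By Proposition \ref{thm:topergodic} this yields $(\ba_\a)^{\F_{\th,f}}=\bc\idd_{\ba_\a}$, i.e.\ topological ergodicity.

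The next step is to upgrade topological ergodicity to minimality, namely to show that the only $\F_{\th,f}$-invariant hereditary $C^*$-subalgebra of $\ba_\a$ is $\ba_\a$ itself (the zero subalgebra being the trivial one). Let $\mathcal H\subset\ba_\a$ be a nonzero $\F_{\th,f}$-invariant hereditary $C^*$-subalgebra. First I would argue that $\mathcal H$ contains a nonzero positive element $a$; averaging $a$ over the Cesàro means $\frac1n\sum_{k=0}^{n-1}\F_{\th,f}^k(a)$ and using unique ergodicity (Theorem \ref{thm:uniquelyergodic}, available since ergodicity is assumed), these means converge in norm to $\t(a)\idd_{\ba_\a}$ by Theorem \ref{main} applied with $\l=1\in\s^{\mathop{\rm ph}}_{\mathop{\rm pp}}(\F_{\th,f})$ and $u_1=\idd_{\ba_\a}$. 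Since $\t$ is faithful and $a\neq0$ is positive, $\t(a)>0$, so a convex combination of elements of the invariant set $\mathcal H$ lies norm-close to $\t(a)\idd_{\ba_\a}$; as $\mathcal H$ is a closed $\F_{\th,f}$-invariant subspace this forces $\idd_{\ba_\a}\in\mathcal H$, whence $\mathcal H=\ba_\a$.

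The main obstacle is the transition from "invariant hereditary $C^*$-subalgebra" to the Cesàro-average argument: one must be careful that the closed linear span of $\{\F_{\th,f}^k(a)\mid k\geq0\}$ stays inside $\mathcal H$ (immediate from invariance and closedness), that a nonzero hereditary $C^*$-subalgebra genuinely contains a nonzero \emph{positive} element (true since a $C^*$-algebra is the span of its positives), and—most delicately—that norm-convergence of $M_{a,1}(n)$ to $\t(a)\idd_{\ba_\a}$ combined with membership of each $M_{a,1}(n)$ in the closed subspace $\mathcal H$ actually yields $\idd_{\ba_\a}\in\mathcal H$. This last point is automatic once one knows $\mathcal H$ is norm-closed and $\t(a)\neq 0$, which is why faithfulness of $\t$ is essential. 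An alternative, perhaps cleaner, route is to invoke a known implication "uniquely ergodic $+$ topologically ergodic $\Rightarrow$ minimal" for unital $C^*$-dynamical systems (as in \cite{LongoPel}), reducing the corollary to the two facts already in hand: topological ergodicity via Proposition \ref{thm:topergodic} and unique ergodicity via Theorem \ref{thm:uniquelyergodic}.
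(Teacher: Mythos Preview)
Your proposal is correct, and in fact more self-contained than the paper's argument. The paper proves this corollary in a single line: since the canonical trace $\tau$ is faithful, minimality follows from \cite{LongoPel}, Corollary 2.6 (implicitly combined with Theorem \ref{thm:uniquelyergodic}, which upgrades ergodicity to unique ergodicity). Your Ces\`aro-average argument is essentially an explicit proof of that cited implication in this setting: unique ergodicity gives $\lim_n M_{a,1}(n)=\tau(a)\idd_{\ba_\a}$ in norm, faithfulness of $\tau$ guarantees $\tau(a)>0$ for $0\neq a\geq 0$, and closedness plus $\F_{\th,f}$-invariance of the hereditary subalgebra $\mathcal H$ then force $\idd_{\ba_\a}\in\mathcal H$, hence $\mathcal H=\ba_\a$.

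One remark: your first step through Proposition \ref{thm:topergodic} (ruling out nonzero \emph{continuous} solutions of the cohomological equation to obtain $(\ba_\a)^{\F_{\th,f}}=\bc\idd_{\ba_\a}$) is superfluous for your direct argument. The Ces\`aro-average computation uses only unique ergodicity and faithfulness of $\tau$; topological ergodicity never enters. It would only be relevant if you took the alternative route you mention at the end, but note that the actual input the paper draws from \cite{LongoPel} is ``uniquely ergodic with faithful invariant state $\Rightarrow$ minimal'' rather than ``uniquely ergodic $+$ topologically ergodic $\Rightarrow$ minimal'', so even there the detour through continuous solutions is not needed. The upshot is that your explicit argument buys self-containment at the cost of a few extra lines, while the paper trades that for a one-line citation; both rest on exactly the same two ingredients: Theorem \ref{thm:uniquelyergodic} and faithfulness of $\tau$.
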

\begin{proof}
Since the canonical trace $\t$ is faithful, the assertion follows by \cite{LongoPel}, Corollary 2.6.
\end{proof}
It is worth pointing out that ,in the classical setting, examples are known of Anzai skew-products which are minimal but nevertheless fail to be uniquely ergodic, see {\it e.g.} \cite{Fu, R} and the references therein. To the best of our knowledge, examples of this sort have not yet been exhibited in the noncommutative setting. Therefore, we intend to return to this interesting point elsewhere. 

\section{processes on the torus}
\label{sec5}

By following the strategy of the proof of Theorem \ref{thm:uniquelyergodic}, we can extend the analysis in Section 2 of \cite{Fu} relative to the so-called processes on the torus, to the non-separable cases.

Indeed, consider a classical dynamical system $\big(C(X_o), T_o,\m_o\big)$, where $X_o$ is a compact space, $\m$ is a probability Radon measure on $X_o$, and finally 
$T_o:X_o\rightarrow X_o$ 
is a homeomorphism preserving the measure $\m_o$. 

On the cartesian product $X:=X_o\times \bt$, consider the (generalisation of the Anzai) skew-product 
$T(x_o,z):=\big(T_ox_o,f(x_o)z\big)$ where $f:X_o\rightarrow\bt$ is a continuous function. As explained in \cite{Fu}, Section 2, it is easily seen that $T$ leaves the measure $\m:=\m_o\times{\rm m}$ invariant, and so we can naturally deal with $\big(C(X), T,\m\big)$. 

The following result generalises Lemma 2.1 of \cite{Fu} to the case when the probability measure $\m_o$ on $X_o$ is not standard.\footnote{The positive Radon measure $\m\in C(X)^*$ on the compact Hausdorff space $X$ is said to be {\it standard} if $L^2(X,\m)$ is separable.}
\begin{prop}
Suppose that the dynamical system $\big(C(X_o), T_o\big)$ is uniquely ergodic. For the dynamical system $\big(C(X), T,\m\big)$, the following are equivalent:
\begin{itemize}
\item[(i)] $\m$ is an ergodic measure for the action of $T$,
\item[(ii)] the equation 
$$
g(T_ox_o)f(x_o)^n=g(x_o)\,, \,\,\m_o\text{-}\,{\rm a.e.}\,,
$$ 
has no nonzero solution in $L^\infty(X_o,\m_o)$ for each $n\in\bz\smallsetminus\{0\}$,
\item[(iii)] the dynamical system $\big(C(X), T\big)$ is uniquely ergodic.
\end{itemize}
\end{prop}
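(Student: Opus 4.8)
The plan is to adapt the proofs of Theorems \ref{thm:ergodic} and \ref{thm:uniquelyergodic} almost verbatim, with $\ba_\a$ replaced by $C(X)$, the canonical trace $\t$ replaced by $\m=\m_o\times{\rm m}$, and the automorphisms $\r^{(o)}_{1,z}$ replaced by the rotations $\rho_w\colon(x_o,z)\mapsto(x_o,wz)$ of $X$ in the circle variable. Since a uniquely ergodic system is automatically ergodic (the unique invariant probability measure is trivially an extreme point of the simplex of invariant measures), the implication (iii)$\Rightarrow$(i) is immediate, so it suffices to prove (i)$\Leftrightarrow$(ii) and (i)$\Rightarrow$(iii).

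For (i)$\Leftrightarrow$(ii) I would work with the Koopman operator $U_T$ on $L^2(X,\m)$ and the orthogonal decomposition
$$
L^2(X,\m)=\bigoplus_{n\in\bz}\ch_n\,,\qquad \ch_n:=\big\{g(x_o)z^n\mid g\in L^2(X_o,\m_o)\big\}\,,
$$
coming from $\m=\m_o\times{\rm m}$ and the Fourier basis $\{z^n\}_{n\in\bz}$ of $L^2(\bt,{\rm m})$. Since $|f|\equiv1$, one has $U_T\big(g(x_o)z^n\big)=g(T_ox_o)f(x_o)^n z^n$, so each $\ch_n$ is $U_T$-invariant and $U_T$ acts there as $g\mapsto(g\circ T_o)f^n$. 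Hence $F=\sum_{n}g_n(x_o)z^n$ is $U_T$-invariant iff $(g_n\circ T_o)f^n=g_n$ for every $n$; the case $n=0$ forces $g_0$ constant, because unique ergodicity of $(C(X_o),T_o)$ makes $\m_o$ ergodic, whereas for $n\neq0$ this is exactly the equation in (ii). Thus the only $U_T$-invariant $L^2$-functions are the constants, i.e.\ $\m$ is ergodic, precisely when (ii) holds.

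For (i)$\Rightarrow$(iii) I would reproduce the averaging scheme of Theorem \ref{thm:uniquelyergodic}. First, $\rho_w\circ T=T\circ\rho_w$ for all $w\in\bt$ (only the abelian group law of $\bt$ is used), so $\om$ invariant implies $\om\circ\rho_w$ invariant. Given an arbitrary $T$-invariant probability measure $\om$ on $X$, its push-forward under $X\to X_o$ is a $T_o$-invariant probability measure, hence equals $\m_o$; therefore the circle average $\bar\om:=\oint\om\circ\rho_w\,\di{\rm m}(w)$ is $T$-invariant, $\rho$-invariant, and has push-forward $\m_o$, whence $\bar\om=\m$ by testing on the total set $\{h(x_o)z^n\mid h\in C(X_o),\ n\in\bz\}$ and using $\oint w^n\di{\rm m}(w)=\d_{n,0}$ together with $\om\big(h(x_o)\big)=\int_{X_o}h\,\di\m_o$. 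So $\m$ is an average of the $T$-invariant states $\om\circ\rho_{e^{\imath\b}}$, $\b\in[0,2\pi)$, and an induction on $j$ — at each step using the extremality of $\m$ (which is equivalent to (i)) — shows $\f_j:=\frac{2^j}{\pi}\int_0^{\pi/2^j}\om\circ\rho_{e^{\imath\b}}\,\di\b=\m$ for every $j$. Letting $j\to\infty$ and using continuity of $\b\mapsto\om\big(\rho_{e^{\imath\b}}(a)\big)$ for fixed $a\in C(X)$ yields $\om=\m$, i.e.\ unique ergodicity.

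The crux — and the whole reason a fresh proof is needed — is the step (i)$\Rightarrow$(iii): Furstenberg's original argument (Lemma 2.1 of \cite{Fu}) proceeds via generic points, whose existence hinges on metrizability of the phase space and hence fails when $\m_o$ is not standard. The circle-averaging and bisection device above circumvents this, relying only on extremality of $\m$ among $T$-invariant states, on the presence of a commuting circle action that averages out along the fibres, and on weak-$*$ continuity of $w\mapsto\om\circ\rho_w$; none of these needs separability. The remaining verifications — the $L^2$ decomposition, the commutation $\rho_w\circ T=T\circ\rho_w$, and the identity $\bar\om=\m$ — are routine.
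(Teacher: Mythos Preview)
Your proposal is correct and follows essentially the same approach as the paper's own proof: the Fourier decomposition $L^2(X,\m)=\bigoplus_n\ch_n$ for (i)$\Leftrightarrow$(ii), the triviality of (iii)$\Rightarrow$(i), and the circle-averaging plus bisection argument of Theorem~\ref{thm:uniquelyergodic} (with your $\rho_w$ playing the role of the paper's $S_\b$) for (i)$\Rightarrow$(iii). The paper likewise invokes Stone--Weierstrass on the functions $F(x_o)z^n$ to verify $\bar\om=\m$, and uses the same push-forward observation to identify $\om$ on $C(X_o)$ with $\m_o$ by unique ergodicity of the base.
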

\begin{proof}
(i)$\iff$(ii) follows similarly to Lemma 2.1 of \cite{Fu}, or equally well to Theorem \ref{thm:ergodic}, by expanding a generic function $h\in L^2(X_o\times\bt,\m_o\times{\rm m})$  in orthogonal pieces as
$$
h(x_o,z)\sim\sum_{n\in\bz}h_n(x_o)z^n\,,
$$
with $\{h_n\}_{n\in\bz}\subset L^2(X_o,\m_o)$ such that $\sum_{n\in\bz}\|h_n\|^2<+\infty$.

(iii)$\Rightarrow$(i) is trivial by noticing that $\m$ is invariant under the action of $T$.

(i)$\Rightarrow$(iii) follows the same line of Theorem \ref{thm:uniquelyergodic} by considering the point-transformation $S_\b$ defined as in \cite{Fu} by
$$
S_\b(x_o,z):=(x_o,e^{\imath\b}z)\,,\quad x_o\in X_o\,,\,\,z\in\bt\,,\,\,\b\in[0,2\pi)\,.
$$
We first notice that $\n\circ S_\b$ is invariant under the action of $T$, provided $\n$ is invariant. Therefore, if $\n$ is any invariant probability measure and $\om:=\int_Xf\di\n$ is the corresponding invariant state on $C(T)$, then $\bar\om:=\frac{1}{2\pi}\int_0^{2\pi} \omega\circ S_\beta \textrm{d}\beta$ is an invariant state which cannot be extreme as we have shown in the proof of Theorem \ref{thm:uniquelyergodic}. We then obtain the assertion if we show that $\bar\om(f)=\int_Xf\di\m=\om(f)$ for each $f\in C(X)$. By the Stone-Weierstrass theorem, it is enough to consider functions of the form $f(x_o,z)=F(x_o)z^n$ which generate a dense set in $C(X)$ as $F$ runs in $C(X_o)$ and $n$ in $\bz$. 

For such a purpose, we start by noticing that for functions $f_F(x_o,z):=F(x_o)$ depending only on the first variable $x_o$, $\n(f_F)$ defines an invariant state on $C(X_o)$ which must coincide with that associated with $\m_o$ by unique ergodicity: $\int_X f_F\di\n=\int_{X_o}F\di\m_o$. 

Now we compute
\begin{align*}
\bar\omega(f)=&\frac{1}{2\pi}\int_0^{2\pi}\di\b e^{\imath n\beta}\int_{T_o\times\bt}\di\n(x_o,z)F(x_o)z^n\\
=&\d_{n,0}\int_{X}f_F\di\n=\d_{n.0}\int_{X_o}F\di\m_o\\
=&\int_X f\di\m =\om(f)\,,
\end{align*}
which ends the proof.
\end{proof}

\section{on the convergence of the average $\frac1{n}\sum_{k=0}^{n-1}\l^{-k}\F^k$}
\label{sec6}

The present section is devoted to showing that there exists an Anzai skew-product on $\ba_\a$ for which the Cesaro means $M_{a,\l}$ given in \eqref{cesmn} do not converge in the weak topology of $\mathbb{A_\alpha}$, for some $a\in\mathbb{A}_\alpha$ and $\l\in\bt$. We then provide examples which do not arise by the tensor product construction exhibited in \cite{F4}.

The following lemma, due to Furstenberg ({\it cf.} \cite{Fu}, pag. 585) and reported in \cite{R}, Lemma 3.2,  applies to the cohomological equation \eqref{eq:cohom}
for $n=1$. 
\begin{lem}
\label{lem:fur1}
There is an angle $\theta$ such that $\frac{\theta}{2\pi}\notin\mathbb{Q}$ and a function $f\in C(\mathbb{T})$ such that the associated cohomological equation \eqref{eq:cohom} 
with $n=1$ admits a solution $g\in L^\infty(\mathbb{T},{\rm m})\smallsetminus C(\mathbb{T})$.
\end{lem}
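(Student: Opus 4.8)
The plan is to exhibit explicitly the angle $\theta$ and the function $f$ by following Furstenberg's original construction, and then to verify by hand that the cohomological equation $\big(g\circ R_{\theta/2\pi}\big)f=g$ has a solution $g$ that is essentially bounded but not continuous. First I would reduce everything to Fourier coefficients: if we write $f=e^{\imath\psi}$ with $\psi$ a suitable real continuous function and look for $g$ of modulus one, say $g=e^{\imath\beta}$, the equation becomes the additive cohomological equation $\beta\circ R_{\theta/2\pi}-\beta=-\psi$ modulo $2\pi$, i.e.\ $\psi$ must be a coboundary in the additive sense, but with a measurable (not continuous) transfer function $\beta$. The classical device here is to pick $\theta/2\pi$ a Liouville-type irrational whose continued fraction denominators $q_k$ grow fast enough, and to build $\psi$ as a lacunary sum $\psi(z)=\sum_k c_k\big(z^{q_k}- z^{q_k}\circ R_{-\theta/2\pi}^{-1}\big)$ — no, more precisely as a rapidly convergent telescoping series engineered so that the formal solution $\beta(z)=\sum_k c_k z^{q_k}$ (roughly) has summable-squared but not summable coefficients, hence lies in $L^2$ (indeed $L^\infty$ after a further modulus-one correction) but not in the Wiener algebra, and in fact can be arranged to be genuinely discontinuous.

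The key steps, in order, would be: (1) choose $\theta$ with $\theta/2\pi$ irrational and continued-fraction convergents $p_k/q_k$ satisfying $\|q_k\,\theta/2\pi\|\le 2^{-k}q_k^{-1}$ or similar (a condition on a measure-zero but nonempty set of $\theta$); (2) define $\beta(z):=\sum_{k\ge1} a_k z^{q_k}$ with positive $a_k$ chosen so that $\sum a_k^2<\infty$ while $\sum a_k=\infty$ and moreover $\sum a_k\,\|q_k\theta/2\pi\|<\infty$, so that the defect $\psi:=\beta-\beta\circ R_{-\theta/2\pi}$ (equivalently $\psi(z)=\sum_k a_k(1-e^{-2\pi\imath q_k\theta/2\pi})z^{q_k}$) has absolutely summable Fourier coefficients and is therefore continuous; (3) set $f:=e^{\imath(\beta\circ R_{\theta/2\pi}-\beta)}=e^{-\imath\psi}$, which is then continuous and $\bt$-valued, and observe $g:=e^{\imath\beta}$ is a measurable, modulus-one, hence $L^\infty$, solution of $\big(g\circ R_{\theta/2\pi}\big)f=g$; (4) argue that $g=e^{\imath\beta}$ is \emph{not} a.e.\ equal to any continuous function — this uses that if it were, then $\beta$ itself would agree a.e.\ with a continuous function mod $2\pi$, which one excludes because the partial sums $\sum_{k\le N}a_k z^{q_k}$ are unbounded in sup-norm (by the lacunarity of $\{q_k\}$ and $\sum a_k=\infty$, via a Sidon/lacunary-series lower bound $\|\sum a_k z^{q_k}\|_\infty\gtrsim\sum a_k$), so $\beta$ is unbounded and the exponential cannot be tamed by a continuous representative.

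The main obstacle is Step (4): passing from ``$\beta$ is the $L^2$-sum of a lacunary series with non-summable coefficients'' to ``$e^{\imath\beta}\notin C(\bt)$ even up to null sets.'' One cannot simply say $\beta$ is discontinuous, since one needs it to remain discontinuous after composing with the periodic exponential and modding out by null sets; the clean way is to invoke the standard fact about Sidon (lacunary) series that $\|\sum_{k}a_k z^{q_k}\|_{L^\infty}$ is comparable to $\sum_k |a_k|$, so $\beta\notin L^\infty$, whereas $e^{\imath\beta}$ is bounded; if $e^{\imath\beta}$ coincided a.e.\ with a continuous $G$, then $|G|=1$ everywhere and one could lift $G$ locally to a continuous $\tilde\beta$ with $e^{\imath\tilde\beta}=e^{\imath\beta}$ a.e., forcing $\beta-\tilde\beta\in 2\pi\bz$ a.e., hence $\beta$ a.e.\ equal to the bounded function $\tilde\beta+2\pi k$ — contradicting $\beta\notin L^\infty$. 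I would carry out this lifting argument carefully (using connectedness of $\bt$ and that the ``jumps'' of $\beta$ across the lacunary blocks cannot all be multiples of $2\pi$, which can be arranged by rescaling the $a_k$), since this is the one point where the construction genuinely needs the measure-theoretic hypotheses rather than a soft argument. The rest — convergence of the series, continuity of $f$, invariance, the verification that $g$ solves \eqref{eq:cohom} — is routine once the arithmetic of $\theta$ and the weights $a_k$ is fixed, and I would simply cite \cite{Fu}, pag.\ 585, and \cite{R}, Lemma 3.2, for the details.
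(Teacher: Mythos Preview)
The paper does not prove this lemma at all: it is stated with attribution to Furstenberg (\cite{Fu}, p.~585) and Robinson (\cite{R}, Lemma~3.2) and closed immediately with a $\square$. Your proposal therefore already goes beyond what the paper does, and your overall outline --- a Liouville angle $\theta/2\pi$, a lacunary transfer function $\beta$, and $f$ defined as the resulting continuous multiplicative coboundary of $g=e^{\imath\beta}$ --- is indeed Furstenberg's scheme.

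That said, Step~(4) as you wrote it contains a real gap. From $\beta-\tilde\beta\in 2\pi\bz$ a.e.\ with $\tilde\beta$ continuous (hence bounded) you \emph{cannot} conclude that $\beta$ is a.e.\ equal to a bounded function: the integer $k$ in $\beta=\tilde\beta+2\pi k$ is a measurable \emph{function} of $z$, not a constant, and an integer-valued $L^2$ function may perfectly well be unbounded (take $\beta=2\pi n$ on disjoint sets of measure $\asymp n^{-4}$; then $\beta\in L^2\smallsetminus L^\infty$ yet $e^{\imath\beta}\equiv 1$). Connectedness of $\bt$ buys nothing here because $\beta$ is only measurable, and your remark about rescaling the $a_k$ does not address this. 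A related minor point: $\beta(z)=\sum_k a_k z^{q_k}$ is complex-valued, so $e^{\imath\beta}$ does not have modulus one and will not even lie in $L^\infty$ once $\mathrm{Im}\,\beta$ is unbounded below; you want a real lacunary series such as $\sum_k a_k\cos(q_k t)$. Since the paper is content to cite \cite{Fu} and \cite{R}, you may of course do the same --- but if you intend to actually carry out Step~(4), the lifting-to-$L^\infty$ contradiction you sketched does not close it, and a different mechanism (closer to Furstenberg's original argument) is needed.
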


\noindent\hspace{12.33 cm}$\square$

The following result addresses non-ergodic Anzai skew-products, which cannot be directly obtained by the known results, {\it cf.} \cite{AD}, Theorem 3.2.
\begin{lem}
\label{lem:fur2}
Let $\theta$ be such that $\frac{\theta}{2\pi}\notin\mathbb{Q}$ and $f\in C(\mathbb{T})$. If for some $n\in\mathbb{Z}\smallsetminus\{0\}$, the cohomological equation \eqref{eq:cohom} 
associated with the Anzai skew-product $(\mathbb{A}_\alpha, \Phi_{\theta, f}, \tau)$ admits a solution $g\in L^\infty(\mathbb{T},{\rm m})\smallsetminus C(\mathbb{T})$, then the limit of the Cesaro means
$M_{a,1}$ given in \eqref{cesmn} does not exist in the weak topology for some $a\in \mathbb{A}_\alpha$.
\end{lem}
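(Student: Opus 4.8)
The plan is to exploit the measurable non-continuous eigenvector produced by the cohomological solution $g$ together with the characterisation of $\s^{\mathop{\rm ph}}_{\mathop{\rm pp}}(\F_{\th,f})$ via \emph{continuous} solutions (Proposition \ref{thm:topergodic}), and then invoke the failure of uniform convergence of Cesaro means in Theorem \ref{main}. Concretely, set $\l=1$ and take $a:=\pi_\t^{-1}$ applied to the relevant operator, or rather work directly with $a=g_0(U)V^n$-type elements. Let me spell this out. Given the solution $g\in L^2(\bt,{\rm m})\smallsetminus C(\bt)$ of \eqref{eq:cohom} for the given $n\neq0$, by Remark \ref{md1} (note $g$ is essentially bounded since it is the eigenfunction of a unitary, or by taking $g$ bounded from Lemma \ref{lem:fur1} when $n=1$ and a suitable power argument in general) we have $|g(z)|=$ const a.e., and after normalisation $g(U)$ defines a \emph{unitary} in the von Neumann algebra $M=\pi_\t(\ba_\a)''$. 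The vector $\xi:=g(\pi_\t(U))\pi_\t(V)^n\xi_\t\in\ch_\t$ is then, by the computation in the proof of Theorem \ref{thm:ergodic}, an eigenvector of $V_{\t,\F_{\th,f}}$ with eigenvalue $1$: indeed $\big((g\circ R_{\th/2\pi})f_n\big)(\pi_\t(U))=g(\pi_\t(U))$ is exactly \eqref{eq:cohom}. Hence $1\in\s^{\mathop{\rm ph}}_{\mathop{\rm pp}}(V_{\t,\F_{\th,f}})$ with a nontrivial eigenvector orthogonal to $\xi_\t$ (since $n\neq0$).

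Next I would show $1\in\s^{\mathop{\rm ph}}_{\mathop{\rm pp}}(V_{\t,\F_{\th,f}})\smallsetminus\s^{\mathop{\rm ph}}_{\mathop{\rm pp}}(\F_{\th,f})$ is witnessed in the sense needed for non-convergence, and identify the element $a$. The obstruction is that $1$ is always in $\s^{\mathop{\rm ph}}_{\mathop{\rm pp}}(\F_{\th,f})$ (the identity is fixed), so the ``measurable non-continuous'' phenomenon at $\l=1$ is more subtle than at $\l\neq1$: the point is not that $1$ fails to be an eigenvalue of $\F_{\th,f}$, but that the eigenspace of $V_{\t,\F_{\th,f}}$ at $1$ is strictly larger than $\pi_\f$ of the $\F_{\th,f}$-fixed points of $\ba_\a$. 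Since $\big(\ba_\a,\F_{\th,f}\big)$ is not ergodic here, one must bypass Theorem \ref{main}. Instead, I would argue directly: the ergodic projection $E_1^{\t,\F_{\th,f}}$ onto $\ker(V_{\t,\F_{\th,f}}-I)$ satisfies $E_1^{\t,\F_{\th,f}}\,\slim_k M_{b,1}(k)\xi_\t$-type convergence in the strong topology by the mean ergodic theorem applied to the contraction $V_{\t,\F_{\th,f}}$; but weak convergence of $M_{a,1}(k)$ \emph{as elements of $\ba_\a$} (i.e. against all states) would force, via evaluating on vector states of the GNS representation, that $M_{a,1}(k)\xi_\t\to E_1^{\t,\F_{\th,f}}\pi_\t(a)\xi_\t$ lies in $\pi_\t(\ba_\a)\xi_\t$ after suitable manipulation — and one picks $a$ so that $E_1^{\t,\F_{\th,f}}\pi_\t(a)\xi_\t$ is a multiple of the non-continuous eigenvector $\xi$, which is \emph{not} in $\pi_\t(\ba_\a)\xi_\t$ because $g\notin C(\bt)$.

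The concrete choice: take $a:=\pi_\t^{-1}$ of... actually simpler, take $a\in\ba_\a$ with $c_{n,a}\in C(\bt)$ chosen so that $\langle \pi_\t(a)\xi_\t,\xi\rangle\neq0$, e.g. $a=U^jV^n$ for a suitable $j$ making $\widehat{\bar g}(j)\neq0$. Then compute $M_{a,1}(k)=\frac1k\sum_{l=0}^{k-1}\F_{\th,f}^l(U^jV^n)$; using $\F_{\th,f}^l(U^jV^n)=e^{\imath j l\th}U^j f_n^{(l)}(U)V^n$ with an explicit cocycle $f_n^{(l)}$, the Cesaro average in the $V^n$-component is $\frac1k\sum_l e^{\imath jl\th}(f_n)_l(U)$-type expression whose $L^2(\bt,{\rm m})$-limit (by the classical mean ergodic / unique ergodicity of the rotation twisted by $f_n$, precisely the eigenvalue-$1$ projection) is a multiple of $g$. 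If $M_{a,1}(k)$ converged weakly in $\ba_\a$ to some $b\in\ba_\a$, then $c_{n,b}$ would be continuous (as a limit in the relevant topology of continuous functions coming from $\ba_\a$-elements) yet must equal $\mathrm{const}\cdot g\notin C(\bt)$, a contradiction. \textbf{The main obstacle} is making rigorous the passage ``weak convergence in $\ba_\a$ $\Rightarrow$ the limiting Fourier coefficient $c_{n,\cdot}$ is continuous'', i.e. controlling that the weak limit genuinely lies in $\ba_\a$ and that its $n$-th coefficient is forced to be $g$; this requires pairing $M_{a,1}(k)$ against states of the form $\om_\eta(x)=\langle\pi_\t(x)\eta,\eta\rangle$ for cleverly chosen $\eta\in\ch_\t$ (e.g. $\eta$ built from $\xi$ and $\xi_\t$) and using that $C^*(U,I)\cong C(\bt)$ is norm-closed in $\ba_\a$ while $g$ lies outside $C(\bt)$ but inside $L^\infty$, so the Fej\'er-convergent expansion \eqref{exfp} of any genuine $\ba_\a$-element cannot have $g$ as its $n$-th coefficient. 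Once this separation is in place, the contradiction is immediate and the lemma follows.
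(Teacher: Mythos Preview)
Your plan is essentially correct and takes a route genuinely different from the paper's. After rightly discarding Theorem~\ref{main} (the system is not uniquely ergodic here), you argue by contradiction: if $M_{a,1}(k)\to b$ weakly in $\ba_\a$ for $a=h(U)V^n$, then on one hand $c_{n,b}\in C(\bt)$ simply because $b\in\ba_\a$, while on the other hand testing against the bounded functionals $x\mapsto\langle\pi_\t(x)\xi_\t,\eta\rangle$, $\eta\in\ch_\t$, together with von~Neumann's mean ergodic theorem for the unitary $V_{\t,\F_{\th,f}}$, forces $\pi_\t(b)\xi_\t=E_1^{\t,\F_{\th,f}}\pi_\t(a)\xi_\t$; since the $V^n$-layer is invariant and its eigenspace at $1$ is the line $\bc\,g(\pi_\t(U))\pi_\t(V^n)\xi_\t$, this yields $c_{n,b}=\text{const}\cdot g$ with nonzero constant for suitable $h$, a contradiction. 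Your self-identified ``main obstacle'' thus dissolves: the limit lying in $\ba_\a$ is automatic from the definition of weak convergence in a Banach space, and the identification $c_{n,b}=\text{const}\cdot g$ follows from the two-line GNS argument just sketched. (Also, $|g|=\text{const}$ a.e.\ follows directly from $|g\circ R_{\th/2\pi}|=|g|$ and ergodicity of the irrational rotation, without any prior $L^\infty$ assumption.)

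The paper's own proof is more ``pointwise'' and closer to Furstenberg's classical argument: it right-multiplies by $V^{-n}$ to land in $C^*(U,I)\sim C(\bt)$, writes $\F_{\th,f}^k(h(U)V^n)V^{-n}=G_k(U)$ with $G_k\in C(\bt)$ satisfying $G_k(z)=h(e^{\imath k\th}z)g(e^{\imath k\th}z)^{-1}g(z)$ a.e., and then tests against \emph{point-evaluation} functionals $\f_z(x)=\om_z(xV^{-n})$; weak convergence would make $z\mapsto\lim_l\frac1l\sum_kG_k(z)$ exist everywhere, and Birkhoff's individual ergodic theorem identifies this limit as $Cg(z)$ a.e.\ with $C\neq0$, so $g$ would be a.e.\ equal to a Baire-$1$ function, hence (by an argument of Furstenberg) continuous --- contradiction. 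Your approach trades Birkhoff and the Baire-class machinery for the mean ergodic theorem and the structural fact $\pi_\t(\ba_\a)\cap\{g(\pi_\t(U))\pi_\t(V)^n:g\in L^\infty\}=\{g(\pi_\t(U))\pi_\t(V)^n:g\in C(\bt)\}$, which is arguably cleaner; the paper's route, in exchange, exhibits an explicit family of functionals on $\ba_\a$ against which the Cesaro means fail to converge.
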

\begin{proof}
We put $\F:=\Phi_{\theta, f}$. By using the fact that $g\in L^\infty(\bt,{\rm m})$ is a non-continuous nonzero solution of the cohomological equation for some $n\neq0$, and in addition that $g\big(\pi_\t(U)\big)$ belongs, up to a normalisation, to the unitary group of the abelian von Neumann algebra 
$\pi_\t\big(C^*(U,I)\big)''$, see {\it e.g.} Remark \ref{md1}, we compute
\begin{equation*}
\begin{split}
\pi_\t\big(\Phi(V^n)\big)=&\pi_\t\big((f(U)V)^n\big) = \pi_\t\big(f_n(U)V^n\big)\\
=&g\big(\pi_\t(e^{\imath \th}U)\big)^{-1}g\big(\pi_\t(U)\big)\pi_\t(V^n)\,.
\end{split}
\end{equation*}
This leads to
$$
\pi_\t\big(\F(V^n)V^{-n}\big)=g\big(\pi_\t(e^{\imath \th}U)\big)^{-1}g\big(\pi_\t(U)\big)\,.
$$
Now we claim that, for each $k\in\bn$,
\begin{equation}
\label{ineq}
\pi_\t\big(\F^k(V^n)V^{-n}\big)=g\big(\pi_\t(e^{\imath k\th}U)\big)^{-1}g\big(\pi_\t(U)\big)\,,
\end{equation}
which is proven by induction.
Indeed, it is trivially true for $k=0$, and the previous calculation has shown that it holds true also when $k=1$. 

Now we assume it holds true for a generic integer  $k\in\bn$ and deduce that it holds true for $k+1\in\bn$. We have
\begin{align*} 
&\pi_\t\big(\F^{k+1}(V^n)V^{-n}\big)=\pi_\t\big(\F\big(\F^k(V^n)V^{-n}\big)\big)\pi_\t\big(\F(V^n)V^{-n}\big)\\
=&\ad\!{}_{V_{\t,\Phi}}\left(g\big(\pi_\t(e^{\imath k\th}U)\big)^{-1}g\big(\pi_\t(U)\big)\right)
g\big(\pi_\t(e^{\imath \th}U)\big)^{-1}g\big(\pi_\t(U)\big)\\
=&g\big(\pi_\t(e^{\imath (k+1)\th}U)\big)^{-1}g\big(\pi_\t(e^{\imath \th}U)\big)g\big(\pi_\t(e^{\imath \th}U)\big)^{-1}g\big(\pi_\t(U)\big)\\
=&g\big(\pi_\t(e^{\imath (k+1)\th}U)\big)^{-1}g\big(\pi_\t(U)\big)\,.
\end{align*} 
Let $C(\bt)\ni h\mapsto h(U)\in\ba_\a$ be the map realising the $*$-isomorphism $C(\mathbb{T})\sim C^*(U,I)$, and fix 
$h\in C(\mathbb{T})$ such that $\int_{\mathbb{T}} h g^{-1}\di{\rm m} \neq 0$. By \eqref{ineq}, the representative in $\pi_\t(\ba_\a)$ of $\Phi^k(h(U)V^n)V^{-n}$ can be easily computed as 
\begin{align*} 
\pi_\t\big(\Phi^k(h(U)V^n)V^{-n}\big)=&\pi_\t\big(\Phi^k(h(U)\big)\pi_\t\big(\Phi^k(V^n)V^{-n}\big)\\
=&\pi_\t(h(e^{\imath k\th}U))g(\pi_\t(e^{\imath k\th}U))^{-1}\big)g\big(\pi_\t(U)\big)\,.
\end{align*} 
The above calculation leads to
\begin{align*} 
\pi_\t(h(e^{\imath k\th}U))g(\pi_\t(e^{\imath k\th}&U))^{-1}g\big(\pi_\t(U)\big)
=\pi_\t\big(\Phi^k(h(U)V^n)V^{-n}\big)\\
\in&\pi_\t(\ba_\a)\bigcap\pi_\t\big(C^*(U,I)\big)''=\pi_\t\big(C^*(U,I)\big)\,,
\end{align*} 
and therefore, for each $k\in\bn$ there exist a unique function $G_k\in C(\bt)$ such that
$$
\Phi^k(h(U)V^n)V^{-n}=G_k(U)\,.
$$
In addition, the last computations also yield
$$
G_k(z)=h(e^{\imath k\th} z)g(e^{\imath k\th} z)^{-1}g(z)\,, \,\,\text{m-a.e.}\,.
$$
Now, for each $z\in\bt$ we consider any extension $\om_z\in\cs(\ba_\a)$ of the state on $C^*(U,I)\sim C(\bt)$ corresponding to the evaluation at the point $z$ ({\it i.e.} $\om_z(f(U))=f(z)$ for $f\in C(\bt)$), together with $\f_z:=\om_z(\,{\bf\cdot}\,V^{-n})$. Notice that
\begin{equation*}
\begin{split}
\f_z\big(\Phi^k(h(U)V^n)\big)=&\om_z\big(\Phi^k(h(U)V^n)V^{-n}\big)\\
=&G_k(z)=h(e^{\imath k\th} z)g(e^{\imath k\th} z)^{-1}g(z)\,,
\end{split}
\end{equation*}
where the last equality is understood almost everywhere.

Concerning the Cesaro averages, we compute 
\begin{align*} 
\frac1{l}\sum_{k=0}^{l-1}G_k(z)=&\f_z\left(M_{1,h(U)V^n}(l)\right)\\
=&\frac1{l}\sum_{k=0}^{l-1}h(e^{\imath k\th} z)g(e^{\imath k\th} z)^{-1}g(z)\,,
\end{align*} 
where as usual the last equality holds almost everywhere.

Notice that, by an elementary application of the Birkhoff individual ergodic theorem,
\begin{align*} 
&\lim_l\bigg(\frac1{l}\sum_{k=0}^{l-1}h(e^{\imath k\th} z)g(e^{\imath k\th} z)^{-1}g(z)\bigg)\\
=&\lim_l\bigg(\frac1{l}\sum_{k=0}^{l-1}h(e^{\imath k\th} z)g(e^{\imath k\th} z)^{-1}\bigg)g(z)
=Cg(z)
\end{align*}
almost everywhere, where $C:=\int_{\mathbb{T}} h g^{-1}\di{\rm m}\neq0$. 

If $M_{a,1}$ were convergent in the weak topology for each $a\in\ba_\a$, we would obtain
$$
g(z)=\lim_l\frac{\frac1{l}\sum_{k=0}^{l-1}G_k(z)}{C}\,,
$$ 
almost everywhere. Therefore, the measurable non-continuous function $g$ would be in the equivalence class of the pointwise limit of the sequence of continuous functions $\frac{\frac1{l}\sum_{k=0}^{l-1}G_k(z)}{C}$, that is it would coincide with a Baire $1^{\rm st}$ class function, almost everywhere. As shown in \cite{Fu}, pag. 584, this leads to a contradiction because, in our situation, $g$ must coincide with a continuous function, almost everywhere, which is contrary to our assumption.
\end{proof}
Lemma \ref{lem:fur1} says that there might be Anzai skew-products $\Phi_{\theta, f}$ that are topologically ergodic,
{\it i.e.} $(\ba_\a)^{\Phi_{\theta, f}}=\bc\idd_{\ba_\a}$, but not ergodic {\it i.e.} $\dim\big(E^{\t,\Phi_{\theta, f}}_1\ch_\t\big)>1$. 
Indeed, by Proposition \ref{thm:topergodic}, it would be enough to determine $f$ for which the cohomological equation \eqref{eq:cohom} does not admit any continuous solution but the trivial ones $\bc1$ for $n=0$, and in addition has some nontrivial measurable non-continuous solution for some $n\in\bz\smallsetminus\{0\}$.

On the other hand, Lemma \ref{lem:fur2} says that there are Anzai skew-products which do not enjoy either a weaker property of unique ergodicity, which we are going to define.

A $C^*$-dynamical system $(\ga,\F)$ based on the unital $C^*$-algebra and the identity-preserving completely positive map $\F:\ga\rightarrow\ga$ is said to be {\it uniquely ergodic w.r.t. the fixed point subalgebra} if it satisfies one of the properties (i)-(vi) listed in Theorem 2.1 of \cite{FM3}, see also \cite{AD}, Definition 3.3. 
In particular, $(\ga,\F)$ is uniquely ergodic w.r.t. the fixed point subalgebra if and only if the ergodic average $\frac1{n}\sum_{k=0}^{n-1}\F^k(a)$ converges, pointwise in norm, for each $a\in\ga$. 

An immediate consequence of Lemma \ref{lem:fur2} is the following
\begin{cor}
Under the hypotheses of Lemma \ref{lem:fur2}, the $C^*$-dynamical system $\big(\mathbb{A}_\alpha, \Phi_{\theta, f}\big)$ is not uniquely ergodic w.r.t. the fixed point subalgebra.
\end{cor}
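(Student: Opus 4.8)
The final statement to prove is the Corollary:

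\begin{quote}
Under the hypotheses of Lemma \ref{lem:fur2}, the $C^*$-dynamical system $\big(\mathbb{A}_\alpha, \Phi_{\theta, f}\big)$ is not uniquely ergodic w.r.t. the fixed point subalgebra.
\end{quote}

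The plan is to deduce this directly from Lemma \ref{lem:fur2} together with the characterisation of unique ergodicity w.r.t. the fixed point subalgebra recalled just before the corollary. First I would recall the equivalence cited from Theorem 2.1 of \cite{FM3}: the $C^*$-dynamical system $(\ga,\F)$ is uniquely ergodic w.r.t. the fixed point subalgebra if and only if, for every $a\in\ga$, the ergodic average $\frac1{n}\sum_{k=0}^{n-1}\F^k(a)$ converges pointwise in norm. In particular, norm convergence of $M_{a,1}(n)=\frac1{n}\sum_{k=0}^{n-1}\F^k(a)$ for every $a$ would a fortiori entail weak convergence of $M_{a,1}(n)$ for every $a\in\ba_\a$, since the norm topology is finer than the weak topology.

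Now invoke Lemma \ref{lem:fur2}: under its hypotheses (i.e.\ $\theta/2\pi\notin\bq$, $f\in C(\bt)$, and the cohomological equation \eqref{eq:cohom} admits a solution $g\in L^2(\bt,{\rm m})\smallsetminus C(\bt)$ for some $n\neq0$), there exists $a\in\ba_\a$ for which $M_{a,1}(n)$ fails to converge in the weak topology. Consequently $M_{a,1}(n)$ cannot converge in norm either, and the criterion from \cite{FM3} is violated. Hence $(\ba_\a,\F_{\th,f})$ is not uniquely ergodic w.r.t.\ the fixed point subalgebra, which is the assertion.

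There is essentially no obstacle here: the corollary is a one-line logical consequence, the entire substance having been carried out in the proof of Lemma \ref{lem:fur2} (the induction \eqref{ineq}, the identification $\Phi^k(h(U)V^n)V^{-n}=G_k(U)$, the Birkhoff-averaging argument showing the limit would force $g$ to be Baire class $1$ hence—by \cite{Fu}, pag.\ 584—continuous almost everywhere, contradicting $g\notin C(\bt)$). The only point worth stating explicitly is the implication ``norm convergence $\Rightarrow$ weak convergence'', which makes the contrapositive of Lemma \ref{lem:fur2} applicable to the $\frac1{n}\sum_{k=0}^{n-1}\F^k(a)$ averages appearing in the \cite{FM3} criterion. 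So the proof reads simply: if $(\ba_\a,\F_{\th,f})$ were uniquely ergodic w.r.t.\ the fixed point subalgebra, then by (the relevant item of) Theorem 2.1 in \cite{FM3} the averages $M_{a,1}(n)$ would converge in norm, a fortiori weakly, for every $a\in\ba_\a$, contradicting Lemma \ref{lem:fur2}.
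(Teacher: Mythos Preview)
Your proposal is correct and matches the paper's approach: the paper gives no proof at all (only a $\square$), since the corollary is declared ``an immediate consequence of Lemma \ref{lem:fur2}'' together with the characterisation from \cite{FM3} recalled just above. Your argument is exactly the intended one-line contrapositive: norm convergence of the averages (equivalent to unique ergodicity w.r.t.\ the fixed point subalgebra) would imply weak convergence, contradicting Lemma \ref{lem:fur2}.
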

\noindent\hspace{12.33 cm}$\square$

The following result is the goal of the present section. Namely, there are noncommutative uniquely ergodic $C^*$-dynamical systems $(\ga,\F,\f)$, 
not based on the tensor product construction, 
with  $\ga$ a unital $C^*$-algebra, and $\F:\ga\rightarrow\ga$ a $*$-automorphism, for which $\s_{\rm pp}(\F)\subsetneq\s_{\rm pp}(V_{\f,\F})$, such that for some 
$\l\in \s_{\rm pp}(V_{\f,\F})\smallsetminus\s_{\rm pp}(\F)$ and $a\in\ga$, the average $\frac1{n}\sum_{k=0}^{n-1}\l^{-k}\F^k(a)$ does not converge, even in the weak topology.
\begin{thm}
\label{thm:lambdacesarononconv-wt}
For each $\a\in\br$, there exists a uniquely ergodic Anzai skew-product $\big(\ba_\a,\F_{\th,\widetilde f},\t\big)$, with $\th$ irrational and $\widetilde f\in C(\bt;\bt)$ not depending on $\a$, such that 
$\s_{\rm pp}\big(\Phi_{\theta, \widetilde f}\big)\subsetneq\s_{\rm pp}\big(V_{\tau,\Phi_{\theta, \widetilde f}}\big)$, and the limit
\begin{equation}
\label{eq:lambdacesarolimit}
\lim_{n\to +\infty} \frac{1}{n} \sum_{k=0}^{n-1} \lambda^{-k} \Phi_{\theta,\widetilde f}^k(a)
\end{equation}
fails to exist in the weak topology of $\ba_\a$, for some $a\in\ba_\a$ and $\l\in \s_{\rm pp}\big(V_{\tau,\Phi_{\theta, \widetilde f}}\big)\smallsetminus\s_{\rm pp}\big(\Phi_{\theta, \widetilde f}\big)$.
\end{thm}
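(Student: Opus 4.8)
The plan is to build $\widetilde f$ by a fibered product of two classical Anzai-type systems over the same irrational rotation, so that the resulting skew-product on $\ba_\a$ is uniquely ergodic but nevertheless has a measurable non-continuous eigenvector. Concretely, I would start from the angle $\th$ and function $f_0\in C(\bt)$ furnished by Lemma \ref{lem:fur1}, so that $(g\circ R_{\th/2\pi})f_0=g$ has a solution $g\in L^\infty(\bt,{\rm m})\setminus C(\bt)$ for $n=1$. I would then pick a second function $f_1\in C(\bt;\bt)$ — for instance $f_1^{(z_o)}(z)=z_oz$ as in Proposition \ref{prop:examples} — which makes its own cohomological equations $(h\circ R_{\th/2\pi})(f_1)_m=h$ have no nonzero $L^2$-solutions for any $m\neq0$. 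The candidate is $\widetilde f:=f_0\cdot f_1$ (or, more flexibly, one works on a slightly larger algebra $C(\bt^2)\rtimes\bz$ realised as a fibered product; but since the statement is about $\ba_\a$ alone, the single product $\widetilde f=f_0 f_1$ should suffice provided $f_1$ is chosen generic enough to kill all the would-be eigenvalues except the one coming from $f_0$).

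The verification then proceeds in three steps. \emph{Step 1: unique ergodicity.} I would show that the cohomological equation $(g\circ R_{\th/2\pi})\widetilde f_n=g$ has no nonzero $L^2$-solution for any $n\neq0$. Here the point is that $\widetilde f_n=(f_0)_n\,(f_1)_n$, and if $g$ solved the $n$-equation for $\widetilde f$ then, combining with the known bounded solution of the $1$-equation for $f_0$ (raised to appropriate powers, using the cocycle identity implicit in \eqref{coo}), one reduces to an equation for $f_1$ alone which by the choice of $f_1$ forces $g=0$; this uses Remark \ref{md1} to control moduli. By Theorem \ref{thm:ergodic} and Theorem \ref{thm:uniquelyergodic}, $\big(\ba_\a,\F_{\th,\widetilde f},\t\big)$ is then uniquely ergodic with unique invariant state $\t$. \emph{Step 2: the spectral gap.} By Proposition \ref{thm:topergodic} one has $\s_{\rm pp}(\F_{\th,\widetilde f})$ controlled by the \emph{continuous} solutions of \eqref{eq:cohom}; but the bounded function $g$ from Lemma \ref{lem:fur1}, viewed as a unitary $g(\pi_\t(U))$, produces an eigenvector of $V_{\t,\F_{\th,\widetilde f}}$ with eigenvalue $1$ (or with eigenvalue a root of unity / a fixed point on the circle, depending on normalisation) in $\ch_\t\setminus\pi_\t(\ba_\a)\xi_\t$, exactly as in the identity $\pi_\t(\F(V^n)V^{-n})=g(\pi_\t(e^{\imath\th}U))^{-1}g(\pi_\t(U))$ already derived in the proof of Lemma \ref{lem:fur2}. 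Since $g\notin C(\bt)$, this eigenvector is not continuous, giving $\s_{\rm pp}(\F_{\th,\widetilde f})\subsetneq\s_{\rm pp}(V_{\t,\F_{\th,\widetilde f}})$, with $\l=1$ (or the relevant $\l$) lying in the difference. \emph{Step 3: non-convergence of the weighted Cesàro means.} This is essentially Lemma \ref{lem:fur2} applied with this $\l$: choosing $a=h(U)V^n$ with $\int_\bt h g^{-1}\di{\rm m}\neq0$, the functionals $\f_z=\om_z(\,\cdot\,V^{-n})$ evaluate $M_{a,\l}(l)$ to a Birkhoff average of the continuous data $h(e^{\imath k\th}z)g(e^{\imath k\th}z)^{-1}$, whose limit is $C g(z)$; weak convergence of $M_{a,\l}(l)$ would then exhibit the non-continuous $g$ as an a.e.-limit of continuous functions, i.e.\ a Baire-$1$ function, contradicting the Furstenberg dichotomy (\cite{Fu}, p.\ 584) which in this situation forces $g$ to agree a.e.\ with a continuous function.

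The main obstacle I anticipate is \emph{Step 1}: proving that the product function $\widetilde f=f_0f_1$ does not reintroduce spurious $L^2$-solutions of \eqref{eq:cohom} for some $n\neq0$ — in particular that the known bounded $1$-solution for $f_0$ does not, when multiplied against solutions coming from $f_1$, conspire to solve an $n$-equation for $\widetilde f$. The clean way around this is to choose $f_1$ so that the two systems are spectrally disjoint over the base rotation: e.g.\ take $f_1^{(z_o)}(z)=z_oz$ with $z_o$ chosen so that the discrete spectra generated by $\th$ and by $z_o$ are rationally independent, so that any cancellation would force the $f_0$-part and the $f_1$-part to vanish separately; then Proposition \ref{prop:examples} handles the $f_1$-part and Lemma \ref{lem:fur1} is used only to supply the single non-continuous eigenvector. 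If one instead prefers to stay literally with a single function on $\ba_\a$, one must verify directly, using \eqref{coo} and the explicit $a_n$ from the proof of Proposition \ref{prop:examples}, that the Fourier coefficients of any putative solution $g$ must have constant modulus and hence vanish by square-summability — this is the one computation I would actually carry out in detail, the rest being assembly of results already in the excerpt.
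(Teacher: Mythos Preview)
Your proposal has a genuine gap, and it lies not in Step~1 where you expect it but in Step~2. With $f_1(z)=z_oz$ and $\widetilde f=f_0f_1$, the vector $g(\pi_\t(U))\pi_\t(V)\xi_\t$ is \emph{not} an eigenvector of $V_{\t,\F_{\th,\widetilde f}}$: acting by $V_{\t,\F_{\th,\widetilde f}}$ gives $g(e^{\imath\th}\pi_\t(U))f_0(\pi_\t(U))\,z_o\pi_\t(U)\,\pi_\t(V)\xi_\t=z_o\,g(\pi_\t(U))\pi_\t(U)\pi_\t(V)\xi_\t$, and the stray factor $\pi_\t(U)$ obstructs the eigenvalue equation. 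More decisively, any $L^2$-eigenvector at level $n=1$ with eigenvalue $\l$ corresponds to a solution $h$ of $h(e^{\imath\th}z)\widetilde f(z)=\l h(z)$; writing $h=gk$ (with $|g|=1$ by Remark~\ref{md1}) reduces this to $k(e^{\imath\th}z)\,z_oz=\l k(z)$, which is the $n=1$ cohomological equation for $f_1^{(z_o/\l)}$ and has no nonzero $L^2$-solution by Proposition~\ref{prop:examples}. So the very $f_1$ you introduce to kill spurious solutions also kills the measurable eigenvector you need. (Your tentative ``eigenvalue $1$'' is in any case excluded once the system is uniquely ergodic, since then $E^{\t,\F}_1\ch_\t=\bc\xi_\t$.) The same obstruction breaks Step~3: with this $\widetilde f$ one no longer has $\F_{\th,\widetilde f}^k(V)=\l^k\F_{\th,f_0}^k(V)$, so there is no direct reduction to Lemma~\ref{lem:fur2}. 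Your Step~1 reduction is also shaky for $|n|\neq1$: the cocycle in \eqref{coo} is built from $R_{-l\a}$, not $R_{\th/2\pi}$, so raising the $1$-solution $g$ to powers does not produce a solution of the $n$-equation for $f_0$, and the division trick does not go through.

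The paper resolves the tension differently. It takes $f_1$ to be the \emph{constant} $e^{\imath\n}$ with $\th,\n$ rationally independent, so $\widetilde f=e^{\imath\n}f_0$. Then $g(\pi_\t(U))\pi_\t(V)\xi_\t$ \emph{is} an eigenvector, with eigenvalue $\l=e^{\imath\n}\neq1$, and one has the exact identity $\F_{\th,\widetilde f}^k(h(U)V)=\l^k\F_{\th,f_0}^k(h(U)V)$, so the $\l$-weighted mean for $\widetilde f$ coincides term by term with the unweighted mean for $f_0$, to which Lemma~\ref{lem:fur2} applies verbatim. Crucially, ergodicity of $\F_{\th,\widetilde f}$ is \emph{not} obtained by analysing the cohomological equations for $\widetilde f$ directly, but by exhibiting a trace-preserving $*$-automorphism $\r$ of $\pi_\t(\ba_\a)''$, namely $\r(u)=u$, $\r(v)=g(u)^{-1}v$, which intertwines $\ad_{V_{\t,\F_{\th,\widetilde f}}}$ with the manifestly ergodic rotation $\r_{e^{\imath\th},e^{\imath\n}}$. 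The idea you were missing is that the same non-continuous $g$ simultaneously supplies the measurable eigenvector and the $W^*$-conjugacy that certifies ergodicity.
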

\begin{proof}
Fix $\n\in\br$ such that $\th$ and $\n$ are rationally independent.\footnote{A finite set $\{\n_1,\dots,\n_n\}\subset\br$ is said to be {\it rationally independent} if for any linear combination of the form
$\sum_{l=1}^nk_l\n_l=0$, with $k_l$ integers, implies $k_1=\cdots=k_n=0$.} 
We note that $\r^{(o)}_{e^{\imath\th},e^{\imath\n}}$ defined in \eqref{rozero} can be viewed as an Anzai skew product associated with the pair $\th,f_\n$ with $f_\n(z)=e^{\imath\n}$, identically. 
Since $\th$ and $\n$ are rationally independent, the cohomological equation \eqref{eq:cohom}, $n\neq0$,
has no non-zero solution in $L^\infty(\mathbb{T},{\rm m})$, and therefore the $C^*$-dynamical system $\big(\ba_\a, \r^{(o)}_{e^{\imath\th},e^{\imath\n}}\big)$ is uniquely ergodic with $\t$ the unique invariant state.

Denote by $\r_{e^{\imath\th},e^{\imath\n}}:\pi_\t(\ba_\a)''\rightarrow\pi_\t(\ba_\a)''$ the canonical extension of $\r^{(o)}_{e^{\imath\th},e^{\imath\n}}$ to the von Neumann algebra $\pi_\t(\ba_\a)''$. It is a 
$*$-automorphism preserving the tracial vector state 
$\langle\,{\bf\cdot}\,\xi_\t,\xi_\t\rangle$, such that the $W^*$-dynamical system $\big(\pi_\t(\ba_\a)'',\r_{e^{\imath\th},e^{\imath\n}},\langle\,{\bf\cdot}\,\xi_\t,\xi_\t\rangle\big)$ is ergodic.

Let $\theta$ and $f$ be as in the statement of  Lemma \ref{lem:fur1} and $\lambda := e^{\imath\nu} \in \mathbb{T}$.
Consider the function $\widetilde f(z) := e^{\imath\nu} f(z)$, and observe that $g\in L^\infty(\mathbb{T},{\rm m})\smallsetminus C(\mathbb{T})$ is a solution of the cohomological equation (\ref{eq:cohom}) for $n=1$, {\it i.e.} $g(e^{\imath\theta} z) f(z) = g(z)$ for almost every $z\in\mathbb{T}$, if and only if 
\begin{equation*}
g(e^{i\theta} z) e^{-i\nu}\widetilde f(z) = g(z)\,,
\end{equation*}
for almost every $z\in\mathbb{T}$. 

Now we show that the Anzai skew-product $\Phi_{\theta, \widetilde f}$ is ergodic, and thus uniquely ergodic. 
To this aim, we define a $*$-automorphism $\r:\pi_\t(\ba_\a)''\rightarrow\pi_\t(\ba_\a)''$ realising the equivalence of the $W^*$-dynamical system 
$\big(\pi_\t(\ba_\a)'',\ad_{V_{\t,\Phi_{\theta, \widetilde f}}},\langle\,{\bf\cdot}\,\xi_\t,\xi_\t\rangle\big)$ with the ergodic $W^*$-dynamical system $\big(\pi_\t(\ba_\a)'',\r_{e^{\imath\th},e^{\imath\n}},\langle\,{\bf\cdot}\,\xi_\t,\xi_\t\rangle\big)$, which leads to the assertion.

Indeed, define $\r$ on the generators $u:=\pi_\t(U)$ and $v:=\pi_\t(V)$ 
$$
\r(u):=u\,,\quad \r(v):=g(u)^{-1}v\,.
$$
By the universal property of the noncommutative 2-torus, $\r$ provides a $*$-automorphism of $\pi_\t(\ba_\a)''$. In addition, an analogous computation as in Proposition \ref{nz1} leads to
$$
\langle\r(a)\xi_\t,\xi_\t\rangle=\langle a\xi_\t,\xi_\t\rangle\,,\quad a\in\pi_\t(\ba_\a)''\,,
$$
that is $\r$ preserves the tracial vector state. Finally, on the generators $u,v$ of $\pi_\t(\ba_\a)''$, 
\begin{align*}
&\r\big(\ad{}_{\!V_{\t,\Phi_{\theta, \widetilde f}}}(u)\big)=e^{\imath\th}u=\r_{e^{\imath\th},e^{\imath\n}}(\r(u))\,,\\
&\r\big(\ad{}_{\!V_{\t,\Phi_{\theta, \widetilde f}}}(v)\big)=\r(\widetilde f(u)v)=\widetilde f(u)g(u)^{-1}v\\
=&g\big(e^{\imath\th}u\big)^{-1}\big(e^{\imath\n}v\big)=\r_{e^{\imath\th},e^{\imath\n}}(\r(v))\,,
\end{align*}
which leads to $\r\circ\ad{}_{\!V_{\t,\Phi_{\theta, \widetilde f}}}=\r_{e^{\imath\th},e^{\imath\n}}\circ\r$.

Next, we show that $\lambda = e^{\imath\nu}$ is a measurable but not continuous eigenvalue of $\Phi_{\theta, \widetilde f}$ by exhibiting an eigenvector in $\ch_\t$ that does not come from an element of 
$\mathbb{A}_\alpha$. Indeed, consider $g(u)v\xi_\t\in\ch_\tau$ and  compute
$$
V_{\Phi_{\theta, \widetilde f}} g(u)v\xi_\tau = \ad{}_{\!V_{\t,\Phi_{\theta, \widetilde f}}}(g(u)v)\xi_\tau = g(e^{i\theta}u)\widetilde f(u)v\xi_\tau = e^{\imath\nu} g(u)v\xi_\tau\,,
$$ 
and thus $g(u)v\xi_\tau$ is an eigenvector with eigenvalue $\lambda$. 
Taking into account that
$$
\left\{g\big(\pi_\t(U)\big)V\mid g\in L^\infty(\bt,{\rm m})\right\}\bigcap\pi_\t(\ba_\a)=\left\{g\big(\pi_\t(U)\big)V\mid g\in C(\bt)\right\}\,,
$$
and $\xi_\t$ is separating, such an eigenvector is indeed a measurable non-continuous one.

Lastly, we show that the limit in \eqref{eq:lambdacesarolimit} with $\l=e^{\imath\n}$ fails to exist in the weak topology, for some $a\in\ba_\a$.
To this aim, consider the Anzai skew-product $\Phi_{\theta, f}$, which is not ergodic by assumption. For every $k\in\mathbb{N}$, we have 
$\Phi_{\theta, f}^k(U) = e^{\imath k\theta}U=\Phi_{\theta,\widetilde f}^k(U)$, while 
$\Phi_{\theta, f}^k(V) =\left[\prod_{l=0}^{k-1}f(e^{\imath l\theta}U)\right]V$ and $\Phi_{\theta, \widetilde f}^k(V) =\left[\prod_{l=0}^{k-1}\widetilde f(e^{\imath l\theta}U)\right]V=\lambda^k\Phi_{\theta,f}^k(V)$. Now take 
$h\in C(\mathbb{T})$ as in the proof of Lemma \ref{lem:fur2}. 
Comparing the two Anzai skew-products on $a := h(U)V\in \mathbb{A}_\alpha$, we get 
$$
\Phi_{\theta, \widetilde f}^k(h(U)V) = \Phi_{\theta, f}^k(h(U))\big(\lambda^k\Phi_{\theta, f}^k(V)\big) = \lambda^k \Phi_{\theta, f}^k(h(U)V)\,,
$$ 
and thus
$$
\frac{1}{n} \sum_{k=0}^{n-1}\lambda^{-k}\Phi_{\theta, \widetilde f}^k(h(U)V) = \frac{1}{n} \sum_{k=0}^{n-1} \Phi_{\theta, f}^k(h(U)V)\,.
$$
Since, by Lemma \ref{lem:fur2}, the r.h.s. fails to converge in the weak topology, the proof follows.
\end{proof}

\section*{Acknowledgements}

The present project is part of \lq\lq MIUR Excellence Department Project awarded to the Department of Mathematics, University of Rome Tor Vergata, CUP E83C18000100006". Furthermore, L. Giorgetti is supported by the European Union's Horizon 2020 research and innovation programme under Grant Agreement 795151 ``beyondRCFT'' H2020-MSCA-IF-2017.

\end{document}